\documentclass[a4, 10pt,leqno]{amsart}
\usepackage{pifont}
\usepackage{mathtools}
\usepackage{mathabx}
\usepackage{bbm}
\usepackage{amsthm}
\usepackage{mathtools}
\usepackage{mathtools,accents}
\usepackage{amssymb,amscd,amsthm,enumerate}
\usepackage{mathrsfs}
\usepackage{a4wide}
\usepackage{eqnarray}
\usepackage{enumitem}
\usepackage[utf8]{inputenc}
\usepackage[T1]{fontenc}
\usepackage{lmodern}
\usepackage[english]{babel}
\usepackage{microtype}
\usepackage{float}
\usepackage{esint}
\usepackage{physics}
\usepackage{a4wide}
\usepackage{algorithmic}
\usepackage[ruled]{algorithm2e}
\usepackage[active]{srcltx}
\usepackage{lipsum}
\usepackage{upgreek}
\usepackage{listings}
\usepackage{wasysym}
\usepackage{csquotes}
\usepackage{enumerate}
\usepackage{color}
\usepackage[x11names,usenames,dvipsnames]{xcolor}
\usepackage{xcolor}
\definecolor{ultrablue}{rgb}{0.0,0.0, 1}
\definecolor{jigari}{rgb}{0.39,0.0, 0.0}
\usepackage{url}
\usepackage{hyperref}
\hypersetup{
	linktoc=page,
	colorlinks,
	linkcolor={Blue1},
	citecolor={Blue1},
	urlcolor={Blue1}
}
\hypersetup{breaklinks=true}
\def\house{\hbox{\kern3pt \vbox to13pt{}%
		\pdfliteral{q 0 0 m 0 5 l 5 10 l 10 5 l 10 0 l 7 0 l 7 5 l 3 5 l 3 0 l f
			1 j 1 J -2 5 m 5 12 l 12 5 l S Q }%
		\kern 13pt}}
\usepackage{fontawesome5}	
\usepackage{soul}
\usepackage{graphicx}
\usepackage{tikz}
\usepackage{tikzpagenodes}
\usepackage{scrlayer-scrpage}
\usepackage{tikz}
\usetikzlibrary{calc,spy}
\usetikzlibrary{shapes.geometric, arrows}
\usepackage{pgfplots}
\usetikzlibrary{intersections, pgfplots.fillbetween}
\usepackage{tikz}
\usetikzlibrary{matrix}
\usepackage[all]{xy}
\usepackage{subfig}
\allowdisplaybreaks
\numberwithin{equation}{section}

\theoremstyle{plain}
\newtheorem*{theorem*}{Main Theorem}
\newtheorem*{thm*}{Theorem}
\newtheorem*{corollary*}{Corrolary}
\newtheorem{lemma}{Lemma}[section]

\newtheorem{proposition}[lemma]{Proposition}

\theoremstyle{definition}
\newtheorem{definition}[lemma]{Definition}
\newtheorem{remark}[lemma]{Remark}

\theoremstyle{remark}

\newcounter{example}

\makeatletter
\def\@xnamedef#1{\expandafter\protected@xdef\csname #1\endcsname}
\def\no@harm{} 
\def\ead@au#1{\protected@edef\@ead@au{#1}}
\patchcmd\runningauthor@fmt{\global\edef}{\protected@xdef}{}{}
\patchcmd\runningauthor@fmt{\global\edef}{\protected@xdef}{}{}
\patchcmd\author@fmt{\edef}{\protected@edef}{}{}
\patchcmd\add@xtok{\xdef}{\protected@xdef}{}{}
\makeatother

\renewcommand{\setminus}{\smallsetminus}

\newcommand{\R}{\mathbb R}
\newcommand{\Sp}{\mathbb S}

\DeclareMathOperator{\supp}{supp}

\DeclareMathOperator{\scal}{scal}

\newcommand{\m}{\mathdutchcal{m}}
\newcommand{\y}{{\footnotesize \textbf{\textit{y}}}}

\newcommand{\dist}{\mathrm{d}}

\usepackage{accents}

\makeatletter
\newcommand*\bigcdot{\mathpalette\bigcdot@{.5}}
\newcommand*\bigcdot@[2]{\mathbin{\vcenter{\hbox{\scalebox{#2}{$\m@th#1\bullet$}}}}}
\makeatother
\newcount\stylenum  \newdimen\styledim 
\def\varstyle#1{\mathchoice{\stylenum=0 #1}{\stylenum=1 #1}{\stylenum=2 #1}{\stylenum=3 #1}}
\def\mathaxis{\fontdimen22\ifcase\stylenum 
	\textfont\or\textfont\or\scriptfont\or\scriptscriptfont\fi2 }
\def\setstyledim{\styledim=\ifcase\stylenum .1em\or.1em\or.07em\or.05em\fi\relax}
\def\sqdot{\mathbin{\varstyle{\raise\mathaxis\hbox{\setstyledim
				\kern\styledim 
				\vrule width1.2\styledim height.6\styledim depth.6\styledim
				\kern\styledim}}}}

		\DeclareMathAlphabet{\mathdutchcal}{U}{dutchcal}{m}{n}
		\SetMathAlphabet{\mathdutchcal}{bold}{U}{dutchcal}{b}{n}
		\DeclareMathAlphabet{\mathdutchbcal}{U}{dutchcal}{b}{n}
		\DeclareSymbolFont{myletters}{OML}{ztmcm}{m}{it}
		\DeclareMathSymbol{\nicelambda}{\mathord}{myletters}{"15}
		\usepackage{nicefrac}
\usepackage{stackengine}
\setstackEOL{\\}
\newcounter{tmpctr}
\newcommand\fancyRoman[1]{%
	\setcounter{tmpctr}{#1}%
	\setbox0=\hbox{\kern.2pt\textsf{\Roman{tmpctr}}}%
	\setstackgap{S}{-.6pt}%
	\Shortstack{\rule{\dimexpr\wd0+.1ex}{.7pt}\\\copy0\\
		\rule{\dimexpr\wd0+.1ex}{.7pt}}%
}

\usepackage{graphicx}
\newcommand\smallO{{\scalebox{1}{$\scriptscriptstyle\mathcal{O}$}}}

		\usepackage{pifont}
		
\makeatother

\makeatletter
\newcommand{\tpitchfork}{%
	\vbox{
		\baselineskip\z@skip
		\lineskip-.52ex
		\lineskiplimit\maxdimen
		\m@th
		\ialign{##\crcr\hidewidth\smash{$-$}\hidewidth\crcr$\pitchfork$\crcr}
	}%
}
\makeatletter
\newcommand{\tpmod}[1]{{\@displayfalse\pmod{#1}}}
\makeatother

\usepackage{stackengine,scalerel}

\newcommand\overstar[1]{\ThisStyle{\ensurestackMath{%
			\setbox0=\hbox{$\SavedStyle#1$}%
			\stackengine{0pt}{\copy0}{\kern0\ht0\smash{\SavedStyle\star}}{O}{c}{F}{T}{S}}}}
\makeatletter
\@namedef{subjclassname@2020}{%
	\textup{2020} Mathematics Subject Classification}
\makeatother
\makeatletter 
\def\l@subsection{\@tocline{2}{0pt}{3pc}{6pc}{}}
\makeatother
\makeatletter 
\def\l@subsection{\@tocline{2}{0pt}{3pc}{6pc}{}}
\makeatother
\begin{document}
\title[\scriptsize{Positive mass and rigidity for asymptotically flat tangent bundles}]{\small  Positive mass and rigidity \\ for asymptotically flat tangent bundles} 
%

\author[\protect \scriptsize S. Lakzian]{Sajjad Lakzian}
\address{Department of Mathematical Sciences, Isfahan University of  Technology (IUT), Isfahan 8415683111, Iran.}
\email{\href{mailto:slakzian@iut.ac.ir}{slakzian@iut.ac.ir}}

\address{School of Mathematics, Institute for Research in Fundamental Sciences (IPM), P. O. Box 19395-
	5746, Tehran, Iran}
\email{\href{mailto:lakzians@gmail.com}{lakzians@gmail.com}}
\subjclass[2020]{53C21, 53Z05, 83C99}
\keywords{ADM mass; positive mass theorem; asymptotically flat manifolds; tangent bundles; scalar curvature; rigidity; asymptotic geometry.}
\thanks{This work was partially supported by the Grant No. 1404530313 from IPM}
\thanks{This work is supported by the INSF Grant No. 4030556, awarded by the “On
	the Frontiers of Mathematical Sciences” program.}
\maketitle
%
\begin{abstract}
\par \textsl{Let $(M,g)$ be an asymptotically flat manifold such that its tangent bundle $TM$ is diffeomorphically asymptotically Euclidean. We prove a positive mass theorem and a positive mass rigidity theorem for a natural class of asymptotically flat metrics $\widetilde g$ on $TM$ exhibiting slow asymptotic decay. These metrics are constructed by interpolating, along the horizontal distribution of $TM$, between the Euclidean metric and the base metric $g$ near infinity. The main difficulty is that the induced metrics on $TM$ decay below the standard threshold for the ADM mass in dimension $2n$; nevertheless, we show that their asymptotic mass is well-defined in a generalized sense and is determined by geometric data on the underlying manifold $M$.}
\end{abstract}
\date{\today}
\section{Introduction}\label{sec:intro}
\par The positive mass theorem (PMT) and its rigidity statement are fundamental results in geometric analysis and mathematical relativity. In its classical form, the theorem asserts that an asymptotically Flat manifold with nonnegative integrable scalar curvature has nonnegative ADM mass, and that vanishing mass forces the manifold to be Euclidean.
\par In essence, the PMT and rigidity theorem asserts the physically intuitive principle that, for any asymptotically flat time slice of an isolated gravitational system, the total mass — measured in this setting by the Arnowitt–Deser–Misner (ADM) mass — is nonnegative whenever the local mass density - the role played by the scalar curvature - is nonnegative. Moreover, the total mass vanishes if and only if the manifold is isometric to Euclidean space (spatial vaccum).
\par In this paper, we investigate an analogous question for asymptotically flat metrics on tangent bundles of asymptotically flat manifolds. 
\subsection{ADM mass and the positive mass theorem}\label{subsec:SY}
\par We begin by recalling the relevant features of the ADM mass. For a smooth function $f$ defined on an open subset of $\R^n$, we use the notation
$
f_{,i}:=\nicefrac{\partial f}{\partial x^i}.
$
If $f$ is smooth outside a compact subset of $\R^n$, we write
\[
f\in\mathcal{O}_k\left(|x|^{-\tau}\right)
\]
provided
\[
|f|+|x|\|Df\|+\cdots+|x|^k\left\|D^kf\right\|
=\mathcal{O}\left(|x|^{-\tau}\right)
\]
as $|x|\to\infty$.
\begin{definition}[Asymptotically Flat manifold]\label{defn:AE}
	A complete Riemannian manifold $(M,g)$ is called asymptotically Flat (AF) if there exists a compact set $K\Subset M$ such that
	\[
	M\setminus K=M_1\sqcup\cdots\sqcup M_k,
	\]
	where each end $M_a$ is diffeomorphic to the complement of a closed Euclidean ball in $\mathbb R^n$, and, in the corresponding coordinates near infinity,
	\[
	g_{ij}-\delta_{ij}\in\mathcal O_2\left(|x|^{-\sigma_a}\right)
	\]
	for some $\sigma_a>0$.
\end{definition}
\par We deliberately do not incorporate the usual restrictions on the decay rate and scalar curvature into the definition. Whenever needed, we shall impose the stronger hypotheses explicitly.
\par For an AF end $(M_a,g)$, the ADM mass is given by
\begin{align}\label{eq:ADM}
	\m(M_a,g)
	:=
	\frac{1}{2(n-1)\omega_{n-1}}
	\lim_{r\to\infty}
	\int_{\mathbb S^{n-1}(r)}
	\left(g_{ij,i}-g_{ii,j}\right)\frac{x^j}{r}\,
	d\mathcal H^{n-1},
\end{align}
whenever the limit exists and is coordinate-independent. Here
\[
\omega_{n-1}:=\mathcal H^{n-1}(\mathbb S^{n-1}(1)\subset\mathbb R^n),
\qquad r:=|x|.
\]
For the standard theory of AF manifolds and the ADM mass, see Lee~\cite[Chapter 3]{Lee2019}; see also the original reference Arnowitt--Deser--Misner~\cite{ArnowittDeserMisner1962} and Bartnik~\cite{Bartnik1986}.
\par The ADM mass is a flux quantity at infinity associated with the asymptotic geometry of the metric; indeed it can be interpreted as the flux of the gravitational field across coordinate spheres at infinity; see Ashtekar-Hansen~\cite{AshtekarHansen1978} and Chrúsciel~\cite{Chrusciel1986a}.
\par The coordinate invariance of the ADM mass is closely tied to the decay rate of the metric. When the scalar curvature is integrable and
\[
\sigma_a>\frac{n-2}{2},
\]
the ADM mass is well-defined and independent of the choice of admissible coordinates at infinity. This is the regime in which the classical positive mass theorem applies.
\begin{thm*}[Positive mass theorem and rigidity]\label{thm:PMTR}
	Let $(M^n,g)$ be a complete asymptotically Euclidean manifold with decay rates
	\[
	\sigma_a>\frac{n-2}{2},
	\]
	and suppose that $\scal_g$ is nonnegative and integrable. Then the ADM mass of each end of $M$ is nonnegative. Moreover, if the ADM mass of an end vanishes, then $(M,g)$ is isometric to $(\mathbb R^n,\delta)$.
\end{thm*}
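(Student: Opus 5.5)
This is the classical positive mass theorem, which the paper quotes rather than proves. The plan below recalls the standard argument in the form I would follow: the Schoen--Yau/Lohkamp route, which works in all dimensions via the recent resolutions, or the Witten route in the spin case.

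\emph{Step 1 (reduction to harmonic asymptotics).} Since $\sigma_a>\frac{n-2}{2}$ and $\scal_g\in L^1$, the mass of each end is well defined and coordinate independent (Bartnik). Given $\varepsilon>0$, I first deform $g$ near infinity. Solve a conformal problem $\bar g=u^{4/(n-2)}\hat g$, where $\hat g$ is a cut-off interpolation of $g$ with $\delta$ on far annuli, and $u\to1$. This produces a metric $\bar g$ with the following properties:
\begin{itemize}
\item it has $\scal_{\bar g}\ge0$;
\item it is conformally flat, $\bar g=U^{4/(n-2)}\delta$, outside a compact set;
\item it has $U=1+\frac{m'}{2}|x|^{2-n}+O(|x|^{1-n})$, with $|m'-\m(M_a,g)|<\varepsilon$.
\end{itemize}
The key estimates are weighted Fredholm theory for $\Delta_g-c_n\scal$ on $C^{2,\alpha}_{-\tau}$, together with the identification of the mass as a flux. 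It therefore suffices to show $m'\ge0$.

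\emph{Step 2 (argue by contradiction).} Suppose $m'<0$. Then a further conformal change makes $\scal>0$ somewhere while keeping $m'<0$. Following Lohkamp, the end can then be modified to be exactly flat outside a large cube, still with $\scal\ge0$ and $\scal>0$ somewhere. Identifying opposite faces of the cube yields a closed manifold $T^n\# N$ carrying a metric of nonnegative, not identically zero, scalar curvature. A further conformal change then gives positive scalar curvature.

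\emph{Step 3 (the obstruction).} This is the main obstacle. I must show that $T^n\#N$ admits no metric of positive scalar curvature. For $n\le7$ this follows from the Schoen--Yau minimal hypersurface descent: take a stable minimal hypersurface dual to a torus class, conformally change it to positive scalar curvature using the stability inequality, and iterate down to dimension $2$, where Gauss--Bonnet gives the contradiction. For $n\ge8$, singular minimal hypersurfaces arise. Here I would invoke the Schoen--Yau singular descent, or Lohkamp's argument. In the spin case, Witten's argument can be used instead: solve $D\psi=0$ with $\psi\to\psi_0$, and use the Lichnerowicz formula to get $\m\,|\psi_0|^2 c=\int(|\nabla\psi|^2+\tfrac14\scal|\psi|^2)\ge0$.

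\emph{Step 4 (rigidity).} Suppose $\m=0$. If $\scal_g\not\equiv0$, conformally scale by the solution of $\Delta u-c_n\scal_g u=0$ with $u\to1$; this strictly lowers the mass to a negative value, which is impossible. So $\scal_g\equiv0$. Next, if $\mathrm{Ric}_g\ne0$, perturb by $g_t=g+t\,\eta\,\mathrm{Ric}_g$, with $\eta$ a cutoff, and conformally restore $\scal=0$. The first variation of mass is then $-c\,t\int\eta|\mathrm{Ric}|^2+o(t)<0$, a contradiction. Hence $g$ is Ricci flat and asymptotically flat with mass zero. The Bishop--Gromov volume ratio tends to $1$ at infinity, which forces $(M,g)\cong(\R^n,\delta)$; in particular $M$ has only one end.
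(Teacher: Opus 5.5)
\textbf{Comparison with the paper.} The paper gives no proof of this statement. It quotes the theorem from Schoen--Yau, Witten and Lohkamp, and only applies it to a contractible, hence one-ended, base. Your outline is that cited argument, and for one-ended $M$ it is fine as a sketch.

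\textbf{Gap in Step~2: several ends.} The statement allows several ends, and two of your steps silently assume there is only one. In Step~2, suppose $M$ has ends other than $M_a$. Then flattening $M_a$ and identifying opposite faces of the cube yields $T^n\# N$ with $N$ still containing the other asymptotically flat ends. This manifold is not closed, so neither the Kazdan--Warner conformal change nor the Schoen--Yau/Lohkamp obstruction is available. You have to close off the other ends first. After the density step, each of them has the form $U^{4/(n-2)}\delta$ with $U>0$ harmonic for $|x|\ge R_0$. Replace $U$ by $\phi(U,C|x|^{2-n})$, where $\phi$ is a smooth concave approximation of $\min$ that is nondecreasing in both variables, and $C$ is large. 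The new factor agrees with $U$ near $|x|=R_0$ and is superharmonic, so $\scal\ge0$. Far out it equals $C|x|^{2-n}$, so the metric there is $C^{4/(n-2)}|x|^{-4}\delta$. Under inversion this is a flat punctured ball, which closes up with a single point.

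\textbf{Gap in Step~4: the variation formula.} Write $m_a$ for the mass of $M_a$. A fixed multiple of $\int\eta|\mathrm{Ric}_g|^2$ is the first variation of the \emph{sum} of the masses of all ends. This quantity is positive for $h=\eta\,\mathrm{Ric}_g$, so one takes $t<0$. For the single end $M_a$, Green's identity gives the first variation
\[
-\frac{1}{2(n-1)\omega_{n-1}}\int_M\bigl\langle\nabla^2G_a-G_a\,\mathrm{Ric}_g,\,h\bigr\rangle\,dV_g ,
\]
where $G_a$ is the harmonic function tending to $1$ on $M_a$ and to $0$ on the other ends. Summing over ends and using $\sum_bG_b\equiv1$ recovers your formula. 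For $h=\eta\,\mathrm{Ric}_g$ the expression above has no sign. Since only $m_a=0$ is assumed, lowering the total mass gives no contradiction. So Ricci-flatness, and with it your conclusion that $M$ has one end, is only obtained if $M$ was one-ended to begin with.

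\textbf{How to repair Step~4.} Rule out extra ends right after proving $\scal_g\equiv0$. Take the harmonic $v$ with $v\to1$ on $M_a$ and $v\to\tfrac12$ on the other ends. It satisfies $\tfrac12<v<1$, hence $v=1-\kappa|x|^{2-n}+\dots$ on $M_a$ with $\kappa>0$. The metric $v^{4/(n-2)}g$ is complete and scalar-flat, and asymptotically flat after rescaling coordinates on the other ends. Its mass on $M_a$ is $-2\kappa<0$, contradicting the nonnegativity part. With only one end, $G_a\equiv1$, and your variation formula and the Bishop--Gromov step go through.
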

\par The positive mass theorem has been established by several different methods and in a variety of geometric settings; see Schoen--Yau~\cite{SchoenYau1979b,SchoenYau1979a,SchoenYau1981a,SchoenYau2017}, Witten~\cite{Witten1981}, and Lohkamp~\cite{Lohkamp2006,Lohkamp2016}. Additional geometric structures can lead to alternative proofs or further interpretations of the mass; see, for example, Lam~\cite{Lam2011}, Mirandola--Vit\'orio~\cite{MirandolaVitório2015}, and Heine--LeBrun~\cite{HeinLeBrun2016}.
\subsection{The obstruction on tangent bundles}
\par Our goal is to investigate positive mass and rigidity for natural AF metrics on tangent bundles of AF manifolds. The basic difficulty is caused by a simple but fundamental dimensional obstruction.
\par Suppose that $(M^n,g)$ is AF. The critical decay rate for the ADM mass in dimension $n$ is
\[
\sigma_{\sf crit}=\frac{n-2}{2}.
\]
At decay rates below this threshold, the flux defining the ADM mass need not be coordinate-independent. At the other end, decay faster than $|x|^{-(n-2)}$ forces the ADM mass to vanish in the standard setting. Thus the rate
\[
\sigma=n-2
\]
is the upper range in which a nontrivial ADM mass can occur. Also the rate $(n-2)$ is natural in the sense that the flux defining the ADM mass admits a coordinate-free formula; see \hyperref[subsubsec:interpolating]{Section~\ref{subsubsec:interpolating}}.
\par Now consider the tangent bundle $TM$, which has dimension $2n$. The corresponding critical decay rate is
\[
\lambda_{\sf crit}=\frac{2n-2}{2}= n-1.
\]
Consequently,
\[
n-2<n-1=\lambda_{\sf crit}.
\]
This inequality is the fundamental source of the difficulty in the present paper. Even when the base metric has the natural decay rate $\sigma=n-2$, a metric on $TM$ naturally constructed from $g$ can have decay at or below the critical rate for the $2n$-dimensional ADM mass. Thus, the classical ADM theory is not directly applicable to the natural tangent-bundle metrics of interest here.
\par This phenomenon forces us to work in a regime in which the usual ADM mass may fail to exist as a coordinate-independent limit. Rather than imposing additional decay assumptions that would exclude the natural tangent-bundle geometry, we retain the asymptotic flux information through its lower and upper limits.
\subsection{Lower and upper ADM masses}\label{subsec:ADM}
\par Let $(M,g)$ be an AF manifold, let $M_a$ be an end, and fix coordinates at infinity
\[
\varphi_\infty=\left(x^1,\ldots,x^n\right).
\]
We define the \emph{lower ADM mass} by
\begin{align*}
	\underline{\m}\left(M_a,g,\varphi_\infty\right)
	:=
	\frac{1}{2(n-1)\omega_{n-1}}
	\varliminf_{r\to\infty}
	\int_{\mathbb S^{n-1}(r)}
	\left(g_{ij,i}-g_{ii,j}\right)
	\frac{x^j}{r}\,
	d\mathcal H^{n-1},
\end{align*}
and the \emph{upper ADM mass} by
\begin{align*}
	\overline{\m}\left(M_a,g,\varphi_\infty\right)
	:=
	\frac{1}{2(n-1)\omega_{n-1}}
	\varlimsup_{r\to\infty}
	\int_{\mathbb S^{n-1}(r)}
	\left(g_{ij,i}-g_{ii,j}\right)
	\frac{x^j}{r}\,
	d\mathcal H^{n-1}.
\end{align*}
\par When the classical ADM mass exists, these quantities agree with it. More precisely, under the usual decay and integrability assumptions,
\[
\sigma_a>\frac{n-2}{2}
\quad\Longrightarrow\quad
\underline{\m}\left(M_a,g,\varphi_\infty\right)
=\overline{\m}\left(M_a,g,\varphi_\infty\right)
=\m(M_a,g).
\]
In particular, the classical mass vanishes when the metric decays faster than $|x|^{-(n-2)}$. At decay rates
\[
\sigma_a\leq\frac{n-2}{2},
\]
the ADM flux need not converge and may depend on the coordinates at infinity; see, for example, Denisov--Solov{\textquotesingle}ev~\cite{DenisovSolov1983}. The lower and upper masses remain well-defined as extended real numbers for any fixed coordinate system.
\subsection{AF tangent bundles}
\par We now turn to the geometric setting of the paper. Let $(M^n,g)$ be an AF manifold. We are interested in metrics $\widetilde g$ on $TM$ that are geometrically related to $g$, rather than arbitrary metrics on the $2n$-dimensional manifold $TM$.
\subsubsection{\small\sf{\textbf{Diffeomorphic rigidity}}}
\par There is already a strong topological restriction on AF tangent bundles. If both $M$ and $TM$ are AF, then $M$ is necessarily open and contractible. In particular, $TM$ is diffeomorphic to $\mathbb R^{2n}$, and both $M$ and $TM$ have a single end. We discuss these restrictions in greater detail in \hyperref[sec:AETB]{Section~\ref{sec:AETB}}.
\subsubsection{\small\sf{\textbf{AF tangent bundles and preferred coordinates}}}\label{subsubsec:dimensionality}
\par We shall use coordinates on $TM$ that are induced, in a suitable sense, by coordinates at infinity on the base manifold. 
\begin{definition}[AF tangent bundles with decay $\geq\lambda$]\label{defn:AE-TB}
	Let $(M^n,g)$ be a contractible AF manifold with decay rate
	\[
	\sigma>\frac{n-2}{2}.
	\]
	For a given metric $\widetilde g$ on $TM$, the pair $(TM,\widetilde g)$ is said to be an \emph{AF tangent bundle} of $(M,g)$ with decay $\geq\lambda>0$ if there exist coordinates at infinity 
	\[
	\left(x^1,\ldots,x^n\right)
	\]
	on $M$ such that the induced tangent coordinates
	\[
	(x^1,\ldots,x^n,y^1,\ldots,y^n)
	\]
	extend to coordinates
	\[
	(\zeta^1,\ldots,\zeta^{2n})
	\]
	at infinity on $TM$ in which $\widetilde g$ is AF with decay at least $\lambda$.
\end{definition}
\par We call any such coordinate system
\[
(\zeta^1,\ldots,\zeta^{2n})
\]
a \emph{preferred coordinate system} near infinity, and denote the corresponding coordinate chart by $\zeta$.
\subsubsection{\small\sf{\textbf{Metrics induced by the base geometry}}}\label{subsubsec:interpolating}
\par The tangent bundle carries additional geometric structure that allows one to relate its metric geometry to that of the base manifold. We exploit this structure by restricting attention to AF metrics $\widetilde g$ that are canonically related to $g$.
\par The most natural component to prescribe from the base metric is the horizontal component. Indeed, horizontal tangent vectors on $TM$ are naturally identified with tangent vectors on $M$, once a connection has been chosen. This leads to a class of metrics whose horizontal geometry interpolates between $g$ and the Euclidean metric near infinity.
\par The positive mass theorem proved below applies to a class of metrics that we call \emph{admissible metrics}; see \hyperref[subsubsec:adms-met]{Section~\ref{subsubsec:adms-met}}. Roughly, an admissible metric is Euclidean outside a uniform tubular neighbourhood of the zero section and has, near infinity, the form
\begin{align*}
	\begin{cases}
		\widetilde g_{(x,\y)}(X^h,Y^h)
		=
		\bigl(\psi(\y)g_p+(1-\psi(\y))\delta\bigr)(X,Y),\\[0.4em]
		\widetilde g_{(x,\y)}(X^h,Y^v)= a^c_{(x,\y)}(X,Y),\\[0.4em]
		\widetilde g_{(x,\y)}(X^v,Y^v)= \bigl(\xi(x,\y)h_p+(1-\xi(x,\y))\delta\bigr)(X,Y),
	\end{cases}
\end{align*}
where
\[
0\leq\psi,\xi\leq1,
\qquad
\psi(\y),\xi(x,\y)\longrightarrow0
\quad\text{as }|\y|\to\infty,
\]
and
\[
a^c_{(x,\y)}\longrightarrow0
\quad\text{as }|\y|\to\infty.
\]
Here $a^c$ denotes the mixed, or cross, term; $h$ is an auxiliary Riemannian metric on the base; and the horizontal and vertical lifts are defined with respect to a fixed background metric $\mathdutchcal g$, which we refer to as the \emph{connecting metric}. The precise decay assumptions and the full definition of admissibility are given in \hyperref[subsubsec:adms-met]{Section~\ref{subsubsec:adms-met}}.
\par The base metric will be assumed to satisfy the decay condition
\[
\sigma\geq n-2.
\]
This is not merely a convenient technical choice. The decay rate $\sigma=n-2$ plays a distinguished role in the theory of asymptotically flat metrics. In particular, decay faster than $|x|^{-(n-2)}$ forces the ADM mass to vanish in the standard setting, while at the rate $n-2$ the mass admits an intrinsic formulation; see Lee~\cite[Remark 3.16]{Lee2019}. 
\par \emph{This condition is essentially optimal for our approach: when the decay rate is below $n-2$, the relevant asymptotic flux on the tangent bundle need not remain bounded}; see \hyperref[rem:opt-decay]{Remark~\ref{rem:opt-decay}}.
\subsection{Why tangent bundles?}
\par The geometric construction considered in this work is also reminiscent of the philosophy underlying Kaluza–Klein theories, where additional geometric directions are introduced in order to encode and study physical fields through the geometry of a higher-dimensional space. In the present setting, the tangent bundle $TM$ provides a canonical geometric enlargement of an asymptotically flat manifold $(M,g)$, with the horizontal directions carrying information inherited from the gravitational geometry of $M$ and the vertical directions encoding the infinitesimal structure of the manifold itself. $TM$ can also be interpreted as the phase space of a moving particle on $M$. 
\par Unlike classical Kaluza–Klein constructions, where the extra dimensions are typically compact and introduced to generate gauge fields, the fibers of $TM$ are noncompact and arise naturally from the differential geometry of the underlying space. Our analysis may therefore be viewed as a Riemannian analogue of a higher-dimensional gravitational extension: we investigate whether fundamental properties of gravitational energy, namely positivity and rigidity of the ADM mass, persist under this canonical passage from $M$ to $TM$. 
\subsection{Main results}
\par The dimensional obstruction described above places the natural metrics on $TM$ outside the classical regime for the $2n$-dimensional ADM mass. Nevertheless, the additional structure inherited from the base manifold allows us to recover a positive mass theorem in this slow-decay setting.
\par The main result establishes positivity and rigidity for the lower ADM mass of an admissible metric on $TM$. The theorem also gives, under an additional trace-rigidity hypothesis on the base metric, an explicit relation between the mass of the tangent bundle and the ADM mass of the base.
\par To formulate the result, we need one additional condition on the asymptotic geometry of $g$. Since
\[
\tr_0g\longrightarrow n
\qquad\text{as }|x|\longrightarrow\infty,
\]
we require this convergence to occur, in an averaged sense, from above.
\begin{definition}[Weakly trace-majorizing]
	An AF metric $g$ on $M$ is called \emph{weakly trace-majorizing} if
	\[
	\fint_{\mathbb S_r}\tr_0g\,d\sigma_r\geq n
	\]
	for all sufficiently large $r$. If
	\[
	\fint_{\mathbb S_r}\tr_0g\,d\sigma_r=n
	\]
	for all sufficiently large $r$, we call $g$ \emph{weakly trace-rigid}.
\end{definition}
\begin{theorem*}\label{thm:main}
	Let $(M,g)$ be a contractible AF manifold. Suppose that $g$ is weakly trace-majorizing, has decay rate
	\[
	\sigma\geq n-2,
	\]
	and has nonnegative integrable scalar curvature. Let $\widetilde g$ be an admissible metric on $TM$. Then the connecting metric $\mathdutchcal g$ is flat and
	\begin{enumerate}
		\item[\small \textbf{\textsf{(a)}}] \emph{\small \textsf{\textbf{Positivity of mass.}}}
		\[
		0\leq
		\underline{\m}(TM,\widetilde g,\zeta)
		\leq
		\overline{\m}(TM,\widetilde g,\zeta)
		<\infty,
		\]
		for any set of preffered coordinates $\zeta$.
		If, in addition, $g$ is weakly trace-rigid, then the ADM mass of $\left(TM, \widetilde g\right)$ in the preffered coordinates exist and
		\[
		 \m(TM,\widetilde g, \zeta) =
		\frac{(n-1)\omega_{n-1}}
		{(2n-1)\omega_{2n-1}}
		\m(M,g).
		\]
		\item[\small \textbf{\textsf{(b)}}] \emph{\small \textsf{\textbf{Rigidity.}}}
		If in a set of preferred coordinates,
		\[
		\underline{\m}(TM,\widetilde g,\zeta)=0,
		\]
		then
		\[
		(M,g)\cong_{\mathrm{isom}}(\mathbb R^n,\delta).
		\]
	\end{enumerate}
\end{theorem*}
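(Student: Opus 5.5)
The plan is to reduce the $2n$-dimensional flux on $TM$ to an $n$-dimensional flux on $M$ by integrating over the fibre directions, and then apply the classical Positive mass theorem and rigidity on the base.

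\textbf{Step 1: flatness of the connecting metric.} Contractibility together with the AF structure of $TM$ (Section~\ref{sec:AETB}) gives $TM\cong\R^{2n}$. The preferred coordinates $\zeta$ extend the induced tangent coordinates $(x,y)$. In these coordinates the admissible metric is Euclidean outside a uniform tubular neighbourhood of the zero section. Horizontal lifts of $\partial_{x^i}$ are $\partial_{x^i}-\Gamma^k_{ij}(\mathdutchcal g)\,y^j\partial_{y^k}$. Since $\widetilde g=\delta$ there and $|y|$ is unbounded, the cross terms $\delta(X^h,Y^v)$ grow linearly in $y$ unless they vanish identically. Hence $\Gamma(\mathdutchcal g)=0$ in these coordinates, and therefore $\mathdutchcal g$ is flat. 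With flat $\mathdutchcal g$, the splitting is the coordinate one: $\widetilde g_{x^ix^j}=\psi g_{ij}+(1-\psi)\delta_{ij}$, and the $x$-$y$ and $y$-$y$ blocks are given by $a^c$ and $\xi h+(1-\xi)\delta$.

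\textbf{Step 2: flux computation.} Write $\zeta=(x,y)$ and $R=|\zeta|$. Decompose the flux integrand $(\widetilde g_{AB,A}-\widetilde g_{AA,B})\zeta^B/R$ over $\mathbb S^{2n-1}(R)$ into three parts: the horizontal block, the vertical block, and the cross block. The vertical and cross blocks are supported in $\{|y|\le C\}$, and their derivatives decay at the rate dictated by the admissibility assumptions. This region meets $\mathbb S^{2n-1}(R)$ in a set of measure $O(R^{n-1})$, so these terms contribute $o(1)$. For the horizontal block, parametrise the sphere near the zero section by $x\in\mathbb S^{n-1}(\rho)$ with $\rho=\sqrt{R^2-|y|^2}$ and $y$ in a fixed ball. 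By Fubini, the flux becomes
\[
\int_{|y|\le C}\psi(y)\Bigl[\int_{\mathbb S^{n-1}(\rho)}(g_{ij,i}-g_{ii,j})\tfrac{x^j}{\rho}\,d\mathcal H^{n-1}\Bigr]dy
\]
plus two further terms. The first comes from the $\partial_y$-derivatives of $\psi$ multiplying $g-\delta$. These terms have size $O(\rho^{-(n-2)}\cdot\rho^{n-1})/R$, which is $O(1)$. Their sign is controlled by the $\tr_0g-n$ average, via $\fint_{\mathbb S_r}(\tr_0 g-n)$. The second comes from the Jacobian factors $\rho/R\to1$. The decay $\sigma\ge n-2$ is exactly what keeps all these terms bounded (Remark~\ref{rem:opt-decay}). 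It gives $\overline{\m}<\infty$, with the bracket tending to $2(n-1)\omega_{n-1}\m(M,g)$.

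\textbf{Step 3: positivity and the trace-rigid case.} The classical Positive mass theorem gives $\m(M,g)\ge0$. The $\psi_{,y}$ terms carry the factor $y^k/R\cdot(\tr_0 g-n)$ after integrating by parts in $y$. Weak trace-majorization makes their $\varliminf$ nonnegative, and hence $\underline{\m}\ge0$. When $g$ is weakly trace-rigid these terms vanish in the limit, so the limit exists. The normalisation constants $2(n-1)\omega_{n-1}$ versus $2(2n-1)\omega_{2n-1}$, together with the normalisation of $\psi$ implicit in admissibility, produce the stated ratio. I expect this bookkeeping, that is, getting exactly the ratio $\tfrac{(n-1)\omega_{n-1}}{(2n-1)\omega_{2n-1}}$ and controlling the boundary term from $\psi$ uniformly, to be the main obstacle. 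I would first check it on the model where $g$ is Schwarzschild.

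\textbf{Step 4: rigidity.} Suppose $\underline{\m}=0$. The lower mass is bounded below by a positive multiple of $\m(M,g)$ plus a nonnegative trace term. Both must therefore vanish, so $\m(M,g)=0$. The rigidity half of the classical theorem then gives $(M,g)\cong(\R^n,\delta)$.
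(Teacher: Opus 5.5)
Your outline follows the paper's proof. The steps are the same:
\begin{itemize}
\item flatness of $\mathdutchcal g$ on $U_\infty$ from the linear growth in $y$ of the coordinate cross term;
\item a co-area/Fubini reduction of the horizontal flux to $\int_{\R^n_y}\psi(y)\,F\bigl(\sqrt{\rho^2-|y|^2}\bigr)\,dy$, where $F(r)=\int_{\Sp_r}\mathscr I_g\,d\sigma_r$ and, in the paper's notation, $\rho=|\zeta|$;
\item a bounded $\psi_{,s}(\tr_0g-n)$ term whose sign comes from weak trace-majorization;
\item negligible cross and vertical blocks;
\item the classical positive mass theorem and rigidity on the base.
\end{itemize}

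The step that fails is the end of your Step 3. Definitions~\ref{defn:adms-1} and~\ref{defn:adms-2} impose no normalization on $\psi$. They require only $0\le\psi\le1$, compact support, $\psi_{,s}\le0$ and $\psi\not\equiv0$. In the weakly trace-rigid case your own computation therefore gives
\[
\m(TM,\widetilde g,\zeta)=\frac{(n-1)\omega_{n-1}}{(2n-1)\omega_{2n-1}}\,\|\psi\|_{L^1(\R^n_y)}\,\m(M,g).
\]
No bookkeeping removes this factor. Replacing $\psi$ by $\psi(\cdot/\lambda)$ keeps $\widetilde g$ admissible after enlarging $\Lambda$. In the trace-rigid case the $\psi_{,s}$-term is identically zero and the other blocks do not involve $\psi$, so the only change is that the $TM$ mass is multiplied by $\lambda^n$, while $\m(M,g)$ stays fixed. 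The displayed identity is therefore false whenever $\m(M,g)\neq0$, unless you add $\|\psi\|_{L^1(\R^n_y)}=1$ or keep the factor. The paper's proof makes the same slip: the factor appears in \eqref{eq:key-mass-estimate} and is dropped in the final formula. Positivity, finiteness and rigidity are unaffected, since they only use $\|\psi\|_{L^1}>0$.

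There are two smaller points.

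First, the sign in Step 3 does not follow from weak trace-majorization alone. After Fubini, the $\psi_{,s}$-term equals
\[
-\int_{\R^n_y}\frac{y\cdot\nabla\psi(y)}{r(y)}\,T\bigl(r(y)\bigr)\,dy,\qquad r(y)=\sqrt{\rho^2-|y|^2},\quad T(r)=\int_{\Sp_r}(\tr_0g-n)\,d\sigma_r .
\]
Only $T$ has a sign, so you also need $-y\cdot\nabla\psi=-s\psi_{,s}\ge0$. This is exactly where the paper uses the monotonicity of $\psi$; see \eqref{eq:coarea-III}. Alternatively, a genuine integration by parts in $y$ gives $n\int\psi\,T(r(y))/r(y)\,dy+O(\rho^{-2})$, because $(T/r)'=O(r^{-1})$ when $g-\delta\in\mathcal O_2(r^{-(n-2)})$. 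That route avoids the monotonicity hypothesis, but it must be carried out; as written, your Step 3 does neither.

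Second, in the type II case your measure count for the vertical block works only because the perturbation enters as the product $\xi(\tr_0h-n)=n\xi(e^H-1)$. On the tube this product and its derivatives are $\mathcal O(\rho^{-(\sigma^v+\tau^v)})$ and $\mathcal O(\rho^{-n})$, whereas $\xi$ alone may decay only like $\rho^{-1}$. Say this explicitly. Once you do, that block is $o(1)$ by decay alone, and the paper's separate sign argument for the type II vertical block is not needed.
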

\par Thus, although the classical ADM mass need not exist for the natural metrics considered on $TM$, the lower and upper asymptotic fluxes remain finite and nonnegative. More significantly, vanishing of the lower mass of the $2n$-dimensional tangent-bundle metric forces the original $n$-dimensional manifold to be Euclidean. In the weakly trace-rigid case, the tangent-bundle mass is quantitatively determined by the ADM mass of the base.
\par A particularly natural subclass is given by the \emph{good metrics}, namely admissible metrics for which the mixed term vanishes,
\[
a^c\equiv0,
\]
and for which $h=g$. In this case the tangent-bundle metric is decoupled in its horizontal and vertical components.
\begin{corollary*}
	In the setting of the \hyperref[thm:main]{Main Theorem}:
	\begin{enumerate}
		\item 	If $\widetilde g$ is a good metric. Then
		\[
		0\leq
		\underline{\m}(TM,\widetilde g,\zeta)
		\leq
		\overline{\m}(TM,\widetilde g,\zeta)
		<\infty.
		\]
		Moreover,
		\[
		\underline{\m}(TM,\widetilde g,\zeta)=0
		\]
		if and only if
		\[
		(M,g)\cong_{\mathrm{isom}}(\mathbb R^n,\delta)
		\qquad\text{and}\qquad
		(TM,\widetilde g)\cong_{\mathrm{isom}}(\mathbb R^{2n},\delta).
		\]
		\item If $g$ is weakly trace-rigid. Then 
		\[
		(M,g)\cong_{\mathrm{isom}}(\mathbb R^n,\delta)
		\] 
		if and only if there exists an admissible metric $\widetilde g$ on $TM$ with vanishing lower mass.
	\end{enumerate}
\end{corollary*}
\subsubsection*{\small\sf{\textbf{Organization of the paper}}}
The remainder of the paper is organized as follows. We begin by examining the topology and asymptotic geometry of AF tangent bundles. We then introduce the class of admissible metrics on $TM$ and analyze the associated connecting metric. The proof of the main theorem proceeds by relating the asymptotic mass flux integral for \(\widetilde{g}\) on \(TM\) to geometric quantities on the base manifold \(M\), and ultimately to the ADM mass of \(g\).
\section{The geometric setup}\label{sec:metrics-TM}
We begin by describing the geometric structure on the tangent bundle that will be used throughout the paper. Let
\[
(x^1,\ldots,x^n)
\]
be a local coordinate system on \(M\). Every vector \(\y\in T_xM\) can be written uniquely as
\[
\y=y^i\partial_{x^i},
\qquad
y^i=dx^i(\y).
\]
Thus
\[
(x^1,\ldots,x^n,y^1,\ldots,y^n)
\]
defines a local coordinate system on \(TM\). We use the notation
\[
\partial_i:=\partial_{x^i},
\qquad
\dot\partial_i:=\partial_{y^i}
\]
for the corresponding coordinate vector fields.
\subsection{The horizontal--vertical splitting}
\par Fix a Riemannian metric \(\mathdutchcal g\) on \(M\), which we refer to as the \emph{connecting metric}. Its Levi--Civita connection induces a canonical horizontal--vertical decomposition
\[
TTM=\mathcal H M\oplus\mathcal V M,
\]
where \(\mathcal H M\) and \(\mathcal V M\) denote the horizontal and vertical subbundles of \(TTM\). Each of these bundles is naturally isomorphic to the pullback bundle \(\pi^*TM\).
\par In the coordinates above, the connection coefficients are
\[
N_i^{\,j}(x,\y) = y^b\,{}^{\mathdutchcal g}\Gamma_{ib}^{\,j}(x),
\]
where \({}^{\mathdutchcal g}\Gamma_{ib}^{\,j}\) are the Christoffel symbols of \(\mathdutchcal g\). The associated adapted frame is
\[
\delta_i
:=
\partial_i-N_i^{\,j}\dot\partial_j,
\qquad
\dot\partial_i,
\]
with \(\delta_i\) spanning the horizontal distribution and \(\dot\partial_i\) spanning the vertical distribution.
\par Accordingly, if \(X=X^i\partial_i\in T_xM\), its horizontal and vertical lifts to \(T_{(x,\y)}TM\) are
\[
X^h=X^i\delta_i,
\qquad
X^v=X^i\dot\partial_i.
\]
We will use these lifts without further comment; see, for example, \cite{Sasaki1958}.
\subsection{Riemannian metrics on $\pmb{TM}$}
\par Relative to the horizontal--vertical splitting determined by the connecting metric \(\mathdutchcal g\), every Riemannian metric \(\widetilde g\) on \(TM\) admits a block decomposition
\[
\begin{aligned}
	\widetilde g_{(x,\y)}(X^h,Y^h)
	&=g^h_{(x,\y)}(X,Y),\\
	\widetilde g_{(x,\y)}(X^h,Y^v)
	&=a^c_{(x,\y)}(X,Y),\\
	\widetilde g_{(x,\y)}(X^v,Y^v)
	&=g^v_{(x,\y)}(X,Y).
\end{aligned}
\]
Here, for each fixed \(\y\), the tensors $g^h_{(\cdot,\y)}$ and $g^v_{(\cdot,\y)}$ are Riemannian metrics on \(M\), while
$
a^c_{(\cdot,\y)}
$
is a symmetric \(2\)-tensor; the superscript \(c\) indicates the cross term.
\par Thus, once the connecting metric \(\mathdutchcal g\) is fixed, we regard any such  \(\widetilde g\)  as the quadruple
\[
\widetilde g = \widetilde g(\mathdutchcal g,g^h,a^c,g^v),
\]
eventhough, $\mathdutchcal g,g^h,a^c,g^v$ only uniquenly determine $\widetilde{g}$ on $TU_\infty$. 
\par The distinction between the connecting metric and the metric \(\widetilde g\) on \(TM\) will be important below. In particular, the horizontal--vertical splitting depends on \(\mathdutchcal g\), whereas the three blocks \(g^h,a^c,g^v\) determine \(\widetilde g\) relative to that splitting.
\subsection{Rigidity of the connecting metric}
\par We next record a rigidity phenomenon for the connecting metric $\mathdutchcal{g}$ that will be used later on.
\subsubsection{\small\sf{\textbf{ Genericity}}}
Let \((U,{x^i})\) be a coordinate chart. We say that a property of smooth functions on \(U\) is \emph{generic} if it holds on an open dense subset of \(\mathcal C^\infty(U)\) endowed with the point-open topology, equivalently, the topology of pointwise convergence. In particular, boundedness is generic in this sense.
\par A Riemannian metric \(g\) on \(U\) will be called \emph{generic} if each of its coordinate coefficients \(g_{ij}\) is generic with respect to the chosen coordinate system. In particular, for every pair of indices \(i,j\),
\[
\left\{x\in U \quad \text{\textbrokenbar}\quad g_{ij}(x)\neq0\right\}
\]
is an open dense subset of \(U\).
\begin{remark}
	This notion of genericity is specific to the present setting and should not be confused with the usual Baire-category notion of genericity in differential topology.
\end{remark}
\begin{lemma}\label{lem:g-gen-flat}
	Let \({x^i,y^i}\) be the coordinates on \(TM\) induced by a coordinate chart \((U,{x^i})\) on \(M\). Suppose that \(\widetilde g\) is generic. Then
	\[
	N_i^{\,k}g^v_{kj}\equiv0,
	\qquad
	\forall,i,j.
	\]
	Consequently,
	\begin{align*}
		\widetilde g_{ij}=g^h_{ij},
		\qquad
		\widetilde g_{i\dot j}=a^c_{ij},
		\qquad
		\widetilde g_{\dot i\dot j}=g^v_{ij}.
	\end{align*}
\end{lemma}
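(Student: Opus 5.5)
Write $\widetilde g$ in the coordinate frame $\{\partial_i,\dot\partial_i\}$ by substituting $\partial_i=\delta_i+N_i^{\,k}\dot\partial_k$ into the block decomposition. Bilinearity and the definitions $\widetilde g(\delta_i,\delta_j)=g^h_{ij}$, $\widetilde g(\delta_i,\dot\partial_j)=a^c_{ij}$, $\widetilde g(\dot\partial_i,\dot\partial_j)=g^v_{ij}$ give
\begin{align*}
\widetilde g_{ij}&=g^h_{ij}+N_i^{\,k}a^c_{kj}+N_j^{\,k}a^c_{ki}+N_i^{\,k}N_j^{\,l}g^v_{kl},\\
\widetilde g_{i\dot j}&=a^c_{ij}+N_i^{\,k}g^v_{kj},\\
\widetilde g_{\dot i\dot j}&=g^v_{ij}.
\end{align*}
The whole task is to show that the correction terms vanish, and the natural place to start is the mixed block. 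There the correction $N_i^{\,k}g^v_{kj}=y^b\,{}^{\mathdutchcal g}\Gamma^{\,k}_{ib}(x)g^v_{kj}(x,\y)$ is linear in the fibre variable $\y$, with coefficients depending only on the connecting metric.

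The genericity hypothesis would then be used as follows. The tensors $g^h_{(\cdot,\y)}$, $a^c_{(\cdot,\y)}$ and $g^v_{(\cdot,\y)}$ are prescribed data on $M$, which is how the paper regards $\widetilde g=\widetilde g(\mathdutchcal g,g^h,a^c,g^v)$. Genericity of the coefficients of $\widetilde g$ means they range over an open dense class, and the nonvanishing sets $\{\widetilde g_{\dot i\dot j}\neq 0\}$ are open and dense. The plan is to compare $\widetilde g_{i\dot j}$ with the cross term $a^c_{ij}$ on this open dense set. For the decomposition to be well defined for generic data, the coordinate mixed coefficient must not carry an additional fixed term built from $\mathdutchcal g$. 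Such a term would cut out a nowhere-dense, non-generic locus unless it vanishes identically.

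Concretely, the argument would go in three steps. First, fix $(i,j)$ and consider the smooth function $F_{ij}=N_i^{\,k}g^v_{kj}$ on $TU$. Second, use the fact that the symmetric cross term $a^c_{ij}$ must match the symmetry in $(i,j)$ of $\widetilde g_{i\dot j}$, which holds for generic $\widetilde g$. This forces $F_{ij}=F_{ji}$ on an open dense set. Perturbing $g^v$ within the generic class then shows that the linear-in-$\y$ expression $F_{ij}$ vanishes on an open dense set. Third, continuity extends this to $F_{ij}\equiv0$. Substituting $F\equiv0$ into the first displayed line kills the quadratic term $N_i^{\,k}N_j^{\,l}g^v_{kl}=N_j^{\,l}F_{il}$. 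For the remaining terms $N_i^{\,k}a^c_{kj}$, I would run the same argument, which yields $\widetilde g_{ij}=g^h_{ij}$. In fact, since $g^v$ is nondegenerate, $F\equiv0$ already gives $N\equiv0$, so those terms vanish directly.

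The main obstacle is the second step: turning the paper's nonstandard notion of genericity, in the point-open topology, into a genuine vanishing statement. I would try to use the invertibility of $g^v$. That reduces $F\equiv0$ to $N_i^{\,k}\equiv0$, i.e.\ ${}^{\mathdutchcal g}\Gamma\equiv0$ in the chart. This is also consistent with the later claim in the Main Theorem that $\mathdutchcal g$ is flat.
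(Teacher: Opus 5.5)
Your coordinate expansion is the same as the paper's. Your closing remark is also a genuine simplification: once $N_i^{\,k}g^v_{kj}\equiv0$ is known, invertibility of $g^v$ gives $N\equiv0$. That removes the $a^c$ and quadratic corrections in $\widetilde g_{ij}$ at once, so the paper's second rescaling argument for those terms is unnecessary.

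The gap is your second step, which carries all the content, and it does not work.
\begin{itemize}
\item Nothing makes $\widetilde g_{i\dot j}=\widetilde g(\partial_i,\dot\partial_j)$ symmetric in $(i,j)$. Symmetry of $\widetilde g$ only gives $\widetilde g_{i\dot j}=\widetilde g_{\dot j i}$. Symmetry of this $n\times n$ block is an exact, closed condition with empty interior, so it is the opposite of a generic property. Moreover, since $a^c$ is symmetric, $F_{ij}=F_{ji}$ is literally equivalent to that symmetry, so the step is circular.
\item Even granting $F_{ij}=F_{ji}$, symmetry does not imply vanishing. You also cannot ``perturb $g^v$ within the generic class'': $g^v_{ij}=\widetilde g_{\dot i\dot j}$ is fixed by the given metric, and the lemma is about that one metric, not an identity holding on an open set of metrics.
\item The heuristic that an extra term built from $\mathdutchcal g$ would ``cut out a non-generic locus'' is unfounded. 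Translating by a fixed smooth function is a homeomorphism of $\mathcal C^\infty$ in the point-open topology, so it carries open dense sets to open dense sets.
\end{itemize}

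The missing idea is one you mention but never use: $N_i^{\,k}=y^b\,{}^{\mathdutchcal g}\Gamma^{\,k}_{ib}(x)$ grows linearly along each fibre. The paper takes genericity to supply boundedness of $\widetilde g_{i\dot j}$, $a^c_{ij}$ and $g^v_{kj}$. Then $\widetilde g_{i\dot j}-a^c_{ij}=y^b\,{}^{\mathdutchcal g}\Gamma^{\,k}_{ib}(x)\,g^v_{kj}(x,\y)$ is bounded on each fibre. Replacing $\y$ by $\lambda\y$ and letting $\lambda\to\infty$ forces this linear-growth term to vanish, and your invertibility observation turns that directly into ${}^{\mathdutchcal g}\Gamma\equiv0$ on $U$.

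One caution on that step: because $g^v$ itself depends on $\y$, the rescaling needs $g^v$ to stay uniformly positive definite along rays, not merely bounded. This holds for the interpolating metrics used later, where $g^v\to\delta$.

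You are right that turning the paper's notion of genericity into something usable is the delicate point; the paper handles it by asserting boundedness. Without some boundedness-versus-linear-growth mechanism of this kind, however, your plan has nothing that actually forces $F$ to vanish.
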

\begin{proof}
	In the coordinate frame,
	\[
	\partial_i = \delta_i+N_i^{\,k}\dot\partial_k.
	\]
	Therefore
	\begin{align}\label{eq:g-tild-horiz}
		\widetilde g_{ij}
		&=
		\widetilde g
		\left(
		\delta_i+N_i^{\,k}\dot\partial_k,
		\delta_j+N_j^{\,l}\dot\partial_l
		\right)\notag\\
		&=
		g^h_{ij}
		+
		N_j^{\,l}a^c_{il}
		+
		N_i^{\,k}a^c_{jk}
		+
		N_i^{\,k}N_j^{\,l}g^v_{kl},
	\end{align}
	while
	\begin{align}\label{eq:g-vert}
		\widetilde g_{i\dot j}
		&=
		\widetilde g
		\left(
		\delta_i+N_i^{\,k}\dot\partial_k,
		\dot\partial_j
		\right)\notag\\
		&=
		a^c_{ij}+N_i^{\,k}g^v_{kj},
	\end{align}
	and
	\[
	\widetilde g_{\dot i\dot j}=g^v_{ij}.
	\]
	\par By genericity, the relevant coordinate coefficients may be taken to be bounded. Since
	\[
	N_i^{\,k}(x,\y) = y^b\,{}^{\mathdutchcal g}\Gamma_{ib}^{\,k}(x),
	\]
	equation \hyperref[eq:g-vert]{\eqref{eq:g-vert}} implies that
	\[
	y^b\,{}^{\mathdutchcal g}\Gamma_{ib}^{\,k}(x)
	\,g^v_{kj}(x,\y)
	\]
	is bounded as \(|\y|\to\infty\). Replacing \(\y\) by \(\lambda\y\) and using the boundedness of \(g^v_{kj}\), we obtain
	\[
	{}^{\mathdutchcal g}\Gamma_{ib}^{\,k}(x)
	\,g^v_{kj}(x,\y)=0, \quad \forall i,j
	\]
	for every \(\y\neq0\). By continuity, this holds for all \((x,\y)\), and hence
	\[
	N_i^{\,k}g^v_{kj}\equiv0.
	\]
	\par Substituting this identity into \hyperref[eq:g-tild-horiz]{\eqref{eq:g-tild-horiz}} gives
	\[
	\widetilde g_{ij} = g^h_{ij}
	+
	N_j^{\,l}a^c_{il}
	+
	N_i^{\,k}a^c_{jk}.
	\]
	Again using boundedness and the fact that \(N_i^{\,k}\) is linear in \(\y\), we may replace \(\y\) by \(\lambda\y\) and let \(\lambda\to\infty\). This yields
	\[
	N_j^{\,l}a^c_{il}
	+
	N_i^{\,k}a^c_{jk}
	\equiv0.
	\]
	Consequently,
	\[
	\widetilde g_{ij}=g^h_{ij},
	\qquad
	\widetilde g_{i\dot j}=a^c_{ij},
	\qquad
	\widetilde g_{\dot i\dot j}=g^v_{ij},
	\]
	as claimed.
\end{proof}
\subsection{Asymptotically Euclidean tangent bundles}\label{sec:AETB}
\par We now turn to the topology and asymptotic geometry of tangent bundles. We first recall the purely topological (and/or differential) notions of asymptotic Euclideanity that will be needed.
\begin{definition}[Top-AE and diff-AE manifolds]
	A manifold \(M^n\) is called \emph{topologically asymptotically Euclidean} (top-AE), respectively \emph{differentially asymptotically Euclidean} (diff-AE), if there exists a compact set \(K\Subset M\) such that
	\[
	M\setminus K=M_1\sqcup\cdots\sqcup M_k,
	\]
	where each end \(M_a\) is homeomorphic, respectively diffeomorphic, to
	\[
	\mathbb R^n\setminus\overline{B_{R_a}}
	\]
	for some \(R_a>0\).
	\par Moreover, the corresponding homeomorphisms or diffeomorphisms are assumed to be restrictions of maps
	\[
	\varphi_a:M_a\longrightarrow\mathbb R^n
	\]
	whose expression in polar coordinates is
	\[
	\varphi_a(p)=(r,q),
	\]
	with \(r\) increasing monotonically toward infinity along the end. More precisely, if
	\[
	K_1\Subset U\subset K_2,
	\]
	where \(U\subset M_a\) is open, then every point of \(M_a\setminus K_2\) has strictly larger \(r\)-coordinate than every point of \(U\setminus K_1\). See \cite[Definition 1.1]{AsadiFathiLakzian2025}.
\end{definition}
\par The distinction between the notions top-AE and diff-AE will therefore only concern the underlying topology and smooth structure.
\begin{remark}
	Every asymptotically flat Riemannian manifold is, in particular, both topologically and differentially asymptotically Euclidean.
\end{remark}
\par The topology of an AE tangent bundle is strongly constrained. Asadi--Fathi--L. \cite{AsadiFathiLakzian2025} classified vector bundles of rank at least two that are topologically asymptotically Euclidean. Applied to the tangent bundle, their result implies that if \(M\) is complete and noncompact and \(TM\) is topologically asymptotically Euclidean, then \(M\) must be an open contractible manifold.
\par Thus, for an AF manifold \((M,g)\), the additional assumption that \(TM\) be AE forces $M$ to be open and contractible. In particular, the tangent bundle is trivial,
\[
TM\cong M\times\mathbb R^n \cong\mathbb R^{2n}.
\]
When \(n=2\), this is the standard, non-exotic \(\mathbb R^4\).
\par The class of open contractible manifolds other than the Euclidean space is considerably large. In particular, there exist uncountably many pairwise non-homeomorphic open contractible \(n\)-manifolds. Classical examples include contractible open subsets of \(\mathbb R^n\) that are not interiors of compact manifolds with boundary, such as the Whitehead manifold. See, for example, \cite{McMillan1962,CurtisKwun1965,Glaser1967}.
\par We will also need the following one-endedness consequence.
\begin{proposition}
	Suppose that \(TM\) is topologically asymptotically Euclidean. Then \(M\) is contractible and has exactly one topological end. Consequently,
	\[
	TM\cong\mathbb R^{2n}
	\]
	and \(TM\) also has exactly one topological end.
\end{proposition}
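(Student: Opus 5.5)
Contractibility of $M$ is already supplied by the classification of Asadi--Fathi--L.\ \cite{AsadiFathiLakzian2025} quoted above, so I take it as given; note that this also forces $M$ to be noncompact, since a closed manifold of positive dimension is never contractible. The remaining claims are: $M$ has exactly one end, $TM\cong\mathbb R^{2n}$, and $TM$ has exactly one end. I would prove them in the following order.

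\textbf{Step 1: $TM$ has one end.} Suppose $TM\setminus K=E_1\sqcup\cdots\sqcup E_k$ with each $E_a\cong\mathbb R^{2n}\setminus\overline{B_{R_a}}$. Since $M$ is contractible, $TM$ is contractible, hence $\tilde H^*(TM;\mathbb Z)=0$. Take a compact $K'\supset K$ such that $TM\setminus K'$ is a disjoint union of $k$ sets $\{r>R'_a\}$, one in each $E_a$. By Poincaré duality, $H^{2n}_c(TM)\cong H_0(TM)=\mathbb Z$. The long exact sequence of the pair $(TM,TM\setminus K')$, combined with excision and the contractibility of $TM$, then gives $\tilde H_0$ of the end region, that is $\mathbb Z^{k-1}$, isomorphic to $H_1(TM,TM\setminus K')\cong H^{2n-1}(K')$ (Alexander--Lefschetz duality). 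I would simplify by choosing $K'$ to be a compact manifold with boundary, namely the complement of the open ends. Then Lefschetz duality gives $H_1(TM,TM\setminus K')\cong H^{2n-1}(K')$, while $H^{2n-1}(K')\cong H_1(K',\partial K')$ is unconstrained a priori. A cleaner route uses the exact sequence $0\to H^0(TM)\to H^0(\partial K')\to H^1(K',\partial K')\cong H_{2n-1}(K')$. Here $H^0(\partial K')=\mathbb Z^k$ because each boundary component is a sphere $S^{2n-1}$, and $H_{2n-1}(K')\to H_{2n-1}(TM)=0$. The essential fact I would rely on is the standard one: a contractible (indeed acyclic) manifold has $H^{0}_{\rm end}$ equal to $\mathbb Z$, because $H^1_c(TM)\cong H_{2n-1}(TM)=0$ and the sequence $0\to H^0(TM)\to H^0_{\rm end}\to H^1_c(TM)$ forces exactly one end. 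So $k=1$.

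\textbf{Step 2: $M$ has one end.} The zero section $M\hookrightarrow TM$ is a homotopy equivalence. The same compactly supported cohomology argument on $M$ gives $\#\mathrm{ends}(M)-1=\operatorname{rank}\operatorname{im}\bigl(H^0_{\rm end}(M)\to H^1_c(M)\bigr)\le \operatorname{rank} H^1_c(M)=\operatorname{rank} H_{n-1}(M)$, by Poincaré duality, using orientability of the contractible $M$. For $n\ge2$ this rank is $0$ by contractibility. For $n=1$, the only contractible noncompact manifold is $\mathbb R$, which has two ends; this case must be excluded. I would do so by the convention $n\ge2$ implicit in the paper, since AF with ADM mass requires $n\ge 3$.

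\textbf{Step 3: $TM\cong\mathbb R^{2n}$.} $TM$ is contractible, and its single end is diffeomorphic to $\mathbb R^{2n}\setminus\overline{B_R}$. Hence $TM=K'\cup(\text{collar})$, where $K'$ is a compact contractible $2n$-manifold with boundary $S^{2n-1}$. Gluing a disk along this boundary gives a homotopy sphere; by the (topological or smooth, since $2n\ge 6$) Poincaré conjecture and the $h$-cobordism theorem, $K'$ is a disk. Therefore $TM\cong\mathbb R^{2n}$, homeomorphically in all dimensions and diffeomorphically for $2n\ge6$. The case $2n=4$ needs care: the paper's remark indicates the standard $\mathbb R^4$, which follows from the diff-AE structure only if $K'$ is a smooth $4$-disk. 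I expect this low-dimensional smooth case to be the main obstacle, and I would defer to \cite{AsadiFathiLakzian2025} for it rather than prove it.
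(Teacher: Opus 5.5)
Your proposal is correct. Its Step 2 is exactly the paper's proof of the one genuinely new claim, the one-endedness of \(M\): the paper also uses \(\widetilde H^0(M\setminus K)\cong H^1(M,M\setminus K)\), passes to the direct limit \(H^1_c(M)\), and applies Poincaré duality \(H^1_c(M)\cong H_{n-1}(M)=0\). The difference is in how you treat \(TM\). The paper proves nothing about \(TM\) in this proposition. It takes \(TM\cong\mathbb R^{2n}\) from \cite{AsadiFathiLakzian2025}, as asserted just before the statement, and reads off that \(TM\) has one end. You instead run the same \(H^1_c\) argument on \(TM\) itself, via \(H^1_c(TM)\cong H_{2n-1}(TM)=0\), so one-endedness of \(TM\) does not depend on knowing its topology. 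You then derive \(TM\cong\mathbb R^{2n}\) from contractibility plus a single spherical end, using the Poincaré conjecture and the \(h\)-cobordism theorem. This turns the ``consequently'' in the statement into an actual deduction, at the cost of heavy machinery; the paper's route is a one-line citation.

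Two of your caveats need comment. Excluding \(n=1\) is genuinely necessary, since \(\mathbb R\) has two ends and \(H_0(\mathbb R)\neq0\), and the paper's proof uses \(n\ge2\) silently. The smooth case \(2n=4\), however, is not an obstacle. For \(n=2\), \(M\) is an open contractible surface, hence diffeomorphic to \(\mathbb R^2\), so \(TM\cong T\mathbb R^2\) is the standard \(\mathbb R^4\). No four-dimensional topology and no deferral to \cite{AsadiFathiLakzian2025} is needed.

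Two smaller repairs. First, prune Step 1 to the \(H^1_c\) argument. Your ``cleaner route'' only closes once you note that \(H_{2n-1}(K')\cong H_{2n-1}(TM)=0\), because \(TM\) deformation retracts onto \(K'\); knowing only that \(H_{2n-1}(K')\) maps into \(0\) does not kill the image of \(\mathbb Z^k\). Second, the hypothesis is only \emph{topological} AE, so in Step 3 the end is a priori only homeomorphic to \(\mathbb R^{2n}\setminus\overline{B_R}\). Without a diff-AE end, your argument yields a homeomorphism (topological Poincaré conjecture plus Schoenflies), not a diffeomorphism.
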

\begin{proof}
	By \cite{AsadiFathiLakzian2025}, \(M\) is contractible. It remains to prove that \(M\) has exactly one end.
	\par Let \(K\Subset M\) be compact. Since \(M\) is contractible,
	\[
	\widetilde H^0(M)=\widetilde H^1(M)=0.
	\]
	The long exact sequence of the pair \((M,M\setminus K)\) therefore gives
	\[
	\widetilde H^0(M\setminus K)
	\cong
	H^1(M,M\setminus K).
	\]
	Passing to the direct limit over compact subsets \(K\) as they get larger (limit over the directed system consisting of compact sets with inclusions),
	\[
	\varinjlim_{K \uparrow \infty}\widetilde H^0(M\setminus K)
	\cong
	\varinjlim_{K \uparrow \infty} H^1(M,M\setminus K)
	\cong
	H^1_{\mathrm{cs}}(M).
	\]
	By Poincaré duality for compactly supported cohomology (see, for example, \cite[Theorem 3.35]{Hatcher2002}),
	\[
	H^1_{\mathrm{cs}}(M)
	\cong
	H_{n-1}(M)=0,
	\]
	since \(M\) is contractible. Hence
	\[
	\varinjlim_{K \uparrow \infty}\widetilde H^0(M\setminus K)=0.
	\]
	The number of connected components of the complements of compact sets can be detected using the (reduced) zeroth cohomology; i.e.,
	\[
	\texttt{\#} (\mathrm{ends}) := \varinjlim_{K \uparrow \infty} {H}^0(M\smallsetminus K) = 1
	\]
Thus \(M\) has exactly one end.
	\par Finally, since 
	$
	TM\cong\mathbb R^{2n}
	$, the tangent bundle also has exactly one end.
\end{proof}
\par In what follows, we therefore assume that \(M\) is a complete, contractible, one-ended, differentially asymptotically Euclidean manifold. Consequently, \(TM\) is also a one-ended differentially asymptotically Euclidean manifold.
\subsection{Tubular and interpolating metrics}
\par We next specify the class of metrics on \(TM\) relevant to our mass construction. Let
$
(TM,\widetilde g)
$
be an AF tangent bundle of an AF manifold \((M,g)\), with preferred coordinates
\[
\zeta = \left(\zeta_1,\ldots,\zeta_{2n}\right)
\]
near infinity, as in \hyperref[defn:AE-TB]{Definition~\ref{defn:AE-TB}}.
\par The metrics of interest will be Euclidean away from a tubular neighborhood of the zero section.
\begin{definition}
	We say that \(\widetilde g\) is \emph{tubular} if there exists \(\Lambda>0\) such that, in the preferred coordinates,
	\[
	\supp\bigl(\widetilde g_{ij}-\delta_{ij}\bigr)
	\subset T_\Lambda(M),
	\qquad
	\forall\,i,j,
	\]
	where
	\[
	T_\Lambda(M)
	:=
	\left\{
		(x,\y)\in TM \quad \text{\textbrokenbar}\quad
		\dist_{\widetilde g}\bigl((x,\y),M\bigr)\le\Lambda
		\right\}
	\]
	is the \(\widetilde g\)-metric tubular neighborhood of the zero section.
\end{definition}
\par The horizontal part of the metrics considered below interpolates between the base metric \(g\) and the Euclidean metric, while the vertical part is allowed to interpolate between another metric \(h\) on \(M\) and the Euclidean metric.
\begin{definition}[Interpolating metrics]
	Let \(\delta\) denote the Euclidean metric in the preferred coordinates near infinity. A Riemannian metric \(\widetilde g\) on \(TM\) is called an \emph{interpolating metric} if, on \(TU_\infty\),
	\begin{align*}
		\widetilde g_{(x,\y)}(X^h,Y^h)
		&=
		\bigl(
		\psi(x,\y)\,g_x
		+
		(1-\psi(x,\y))\,\delta
		\bigr)(X,Y),\\
		\widetilde g_{(x,\y)}(X^h,Y^v)
		&=
		a^c_{(x,\y)}(X,Y),\\
		\widetilde g_{(x,\y)}(X^v,Y^v)
		&=
		\bigl(
		\xi(x,\y)\,h_x
		+
		(1-\xi(x,\y))\,\delta
		\bigr)(X,Y),
	\end{align*}
	where \(g\) and \(h\) are Riemannian metrics on \(M\), \(a^c\) is a symmetric \((0,2)\)-tensor on \(TM\), and
	\[
	0\le\psi,\xi\le1
	\]
are smooth functions. We further assume that
	\[
	\psi(x,\y),\ \xi(x,\y)\longrightarrow0
	\qquad\text{as }|\y|\to\infty,
	\]
	and
	\[
	a^c_{(x,\y)}\longrightarrow0
	\qquad\text{as }|(x,\y)|\to\infty,
	\]
	with the latter convergence understood tensorially.
\end{definition}
\par The preceding general form simplifies considerably under the decay assumptions relevant to our construction.
\begin{lemma}\label{lem:inter-met}
	Suppose that \(\widetilde g\) is an interpolating metric and that
	\[
	\xi(x,\y)=o(|\y|^{-1})
	\qquad\text{as }|\y|\to\infty
	\]
	for every \(x\in U_\infty\). Then the connecting metric \(\mathdutchcal g\) is constant in the preferred coordinates on \(U_\infty\), equivalently,
	\[
	{}^{\mathdutchcal g}\Gamma_{ib}^{\,j}=0
	\qquad\text{on }U_\infty.
	\]
	Consequently, on \(TU_\infty\),
	\begin{align}\label{eq:nice-form}
		\widetilde g_{ij}
		&=
		\psi g_{ij}+(1-\psi)\delta_{ij},\notag\\
		\widetilde g_{i\dot j}
		&=
		a^c_{ij}, \\
		\widetilde g_{\dot i\dot j}
		&=
		\xi h_{ij}+(1-\xi)\delta_{ij}.\notag
	\end{align}
\end{lemma}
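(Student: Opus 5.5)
The plan is to follow the scheme of the proof of Lemma~\ref{lem:g-gen-flat}, now using the interpolating structure and the decay of $\xi$ in place of genericity. The starting point is the coordinate expressions already derived there. Since $\partial_i=\delta_i+N_i^{\,k}\dot\partial_k$, we have
\[
\widetilde g_{i\dot j}=a^c_{ij}+N_i^{\,k}g^v_{kj},
\qquad
g^v_{kj}=\xi h_{kj}+(1-\xi)\delta_{kj}.
\]
The preferred coordinates $\zeta$ extend the induced coordinates $(x,\y)$, and $\widetilde g$ is AF in them. Hence, for fixed $x\in U_\infty$, the coefficient $\widetilde g_{i\dot j}(x,\y)$ tends to $\delta_{i\dot j}=0$ as $|\y|\to\infty$. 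By assumption $a^c\to0$ as well. It follows that
\[
N_i^{\,k}g^v_{kj}=y^b\,{}^{\mathdutchcal g}\Gamma_{ib}^{\,k}(x)\bigl(\delta_{kj}+\xi(h_{kj}-\delta_{kj})\bigr)\longrightarrow0 .
\]

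The key step is to isolate the Christoffel symbols. The hypothesis $\xi=o(|\y|^{-1})$ gives
\[
y^b\,{}^{\mathdutchcal g}\Gamma_{ib}^{\,k}(x)\,\xi(x,\y)\,(h_{kj}-\delta_{kj})(x)\longrightarrow0,
\]
because $h(x)$ is fixed and the factor $y^b$ grows only linearly. Subtracting this from the previous limit leaves $y^b\,{}^{\mathdutchcal g}\Gamma_{ib}^{\,j}(x)\to0$ as $|\y|\to\infty$. This is a linear function of $\y$ that tends to zero, so I take $\y=\lambda e_b$ and let $\lambda\to\infty$. This forces ${}^{\mathdutchcal g}\Gamma_{ib}^{\,j}(x)=0$ for all $i,b,j$, at every $x\in U_\infty$. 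Vanishing Christoffel symbols give $\partial_k\mathdutchcal g_{ij}=\mathdutchcal g_{lj}\Gamma^l_{ki}+\mathdutchcal g_{il}\Gamma^l_{kj}=0$, so $\mathdutchcal g$ is constant on (each component of) $U_\infty$. Conversely, a constant metric has vanishing Christoffel symbols, which gives the stated equivalence.

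Finally, $N_i^{\,j}\equiv0$ on $TU_\infty$. Then $\delta_i=\partial_i$, the horizontal and vertical lifts coincide with the coordinate fields, and substituting the interpolating blocks into the three formulas yields \eqref{eq:nice-form}.

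The main obstacle is justifying the limit $\widetilde g_{i\dot j}\to0$ along a fixed fibre. The AF decay in $\zeta$ is stated in terms of $|\zeta|\to\infty$, and for fixed $x$ we have $|\zeta|\ge|\y|\to\infty$, so this is fine. We only need pointwise convergence, not any derivative bounds. It is also important to use the cross-term convergence $a^c\to0$ as $|(x,\y)|\to\infty$, which is weaker than the AF condition but suffices for fixed $x$.
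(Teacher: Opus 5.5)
Your proof is correct and takes the same route as the paper. Both write $\widetilde g_{i\dot j}=a^c_{ij}+N_i^{\,k}g^v_{kj}$, use $\xi=o(|y|^{-1})$ against the linear growth of $N_i^{\,k}$ to discard the $\xi$-part and get $N_i^{\,j}\to0$, and then use linearity in $y$ to force the Christoffel symbols to vanish. The one difference is that you obtain $N_i^{\,k}g^v_{kj}\to0$ directly from the AF decay of $\widetilde g$ in preferred coordinates together with $a^c\to0$, whereas the paper invokes the genericity argument of Lemma~\ref{lem:g-gen-flat} to claim the identity $N_i^{\,k}g^v_{kj}\equiv0$; only the limit is actually needed, and your justification of it is the more careful one.
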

\begin{proof}
	Similar to the beginning of the proof of \hyperref[lem:g-gen-flat]{Lemma~\ref{lem:g-gen-flat}}, one infers
	\[
	N_i^{\,k}g^v_{kj}\equiv0.
	\]
	For an interpolating metric,
	\[
	g^v_{kj} =	\xi h_{kj}+(1-\xi)\delta_{kj},
	\]
	and hence
	\[
	\xi N_i^{\,k}h_{kj}
	+
	(1-\xi)N_i^{\,j}
	\equiv0.
	\]
	Since
	\[
	N_i^{\,k}(x,\y) = y^b\,{}^{\mathdutchcal g}\Gamma_{ib}^{\,k}(x) = \mathcal O(|\y|),
	\]
	while
	\[
	\xi(x,\y)= \smallO(|\y|^{-1}),
	\]
	we have
	\[
	\xi(x,\y)\,N_i^{\,k}(x,\y)\,h_{kj}(x)\longrightarrow0
	\]
	as \(|\y|\to\infty\). The preceding identity therefore implies
	\[
	N_i^{\,j}(x,\y)\longrightarrow0
	\qquad\text{as }|\y|\to\infty.
	\]
	But \(N_i^{\,j}\) is linear in \(\y\):
	\[
	N_i^{,j}(x,\lambda\y) = \lambda y^b\,{}^{\mathdutchcal g}\Gamma_{ib}^{\,j}(x).
	\]
	Thus, for every \(\y\neq0\),
	\[
	y^b\,{}^{\mathdutchcal g}\Gamma_{ib}^{\,j}(x)=0.
	\]
	Since this expression is linear in \(\y\),
	\[
	{}^{\mathdutchcal g}\Gamma_{ib}^{\,j}(x)=0
	\]
	for all \(i,b,j\). Hence the connecting metric is constant in the preferred coordinates on \(U_\infty\).
	
	The expression \hyperref[eq:nice-form]{\eqref{eq:nice-form}} now follows directly from the definition of an interpolating metric and the vanishing of the connection coefficients.
\end{proof}
\subsection{Admissible metrics}\label{subsubsec:adms-met}
\par We now impose the decay and monotonicity conditions that define the class of metrics for which the positive mass argument will be carried out.
\par Throughout this subsection, quantities carrying the subscript \(0\) are computed with respect to the Euclidean metric \(\delta\). On \(TU_\infty\), we write
\[
\rho^2=r^2+s^2,
\qquad
r:=|x|,
\qquad
s:=|\y|.
\]
\par The essential feature of the admissible class is that the horizontal component is permitted to exhibit the slow decay inherited from the base metric \(g\). The remaining components are required to decay sufficiently rapidly so that their contributions to the asymptotic flux are controlled.
\begin{definition}[Admissible metrics of type I]\label{defn:adms-1}
	A tubular interpolating metric \(\widetilde g\) is called an \emph{admissible metric of type I} if the following conditions hold.
	\begin{enumerate}
		\item\label{item:admI-1}
		The function
		\[
		\psi=\psi(\y) \not\equiv 0
		\]
		depends only on the fiber variable, is compactly supported, and satisfies
		\[
		\psi_{,s}\le0.
		\]
		Thus \(\psi\) is a radially non-increasing cutoff function in the fiber variable.
		\item\label{item:admI-2}
		The base metric, cross term, and vertical component satisfy
		\[
		g_{ij}-\delta_{ij}
		\in
		\mathcal O_2(r^{-(n-2)}),
		\]
		\[
		a^c_{ij}
		\in
		\mathcal O_1(\rho^{-\lambda^c}),
		\qquad \text{where} \quad 
		\lambda^c>2n-1,
		\]
		and
		\[
		h_{ij}
		\in
		\mathcal O_1(r^{-\sigma^v}),
		\qquad
		\xi\in\mathcal O_1(\rho^{-\tau^v}),
		\]
		where
		\[
		\sigma^v+\tau^v>2n-1,
		\qquad
		\tau^v\ge2n-1.
		\]
	\end{enumerate}
\end{definition}

According to the \hyperref[defn:AE-TB]{Definition~\ref{defn:AE-TB}}, equipped with admissible metrics of type I, the tangent bunle becomes an AF tnagent bundle with decay at least $n-2$.

\par The second class allows slower decay in the vertical interpolation, at the cost of imposing additional structure on the vertical metric.
\begin{definition}[Admissible metrics of type II]\label{defn:adms-2}
	A tubular interpolating metric \(\widetilde g\) is called an \emph{admissible metric of type II} if it satisfies \hyperref[item:admI-1]{condition~\ref{item:admI-1}} of \hyperref[defn:adms-1]{Definition~\ref{defn:adms-1}} and the following additional conditions.
	\begin{enumerate}
		\item\label{item:admII-1}
		The function
		\[
		\xi=\xi(x,\y)
		\]
		is compactly supported in the fiber variable and satisfies
		\[
		\xi_{,s}\le0.
		\]
		Thus, for each \(x\in U_\infty\), the function \(\xi(x,\cdot)\) is radially non-increasing cut-off function on the fiber.
		\item\label{item:admII-2}
		The vertical metric \(h\) is conformally flat:
		\[
		h_{ij}=e^{H(x)}\delta_{ij},
		\]
		where \(H\) is non-negative and radially non-increasing.
		\item\label{item:admII-3}
		The decay assumptions are
		\[
		g_{ij}-\delta_{ij}
		\in
		\mathcal O_2(r^{-(n-2)}),
		\]
		\[
		a^c_{ij}
		\in
		\mathcal O_1(\rho^{-\lambda^c}),
		\qquad \text{where} \quad 
		\lambda^c>2n-1,
		\]
		and
		\[
		H\in\mathcal O_1(r^{-\sigma^v}),
		\qquad
		\xi\in\mathcal O_1(\rho^{-\tau^v}),
		\]
		where
		\[
		\tau^v\ge1,
		\qquad
		\sigma^v+\tau^v\ge n-1.
		\]
	\end{enumerate}
\end{definition}
\par We refer to a metric as \emph{admissible} whenever it is admissible of either type I or type II.
\begin{definition}[Good metrics]
	An admissible metric \(\widetilde g\) is called \emph{good} if
	\[
	g=h, \qquad \text{and} \qquad a^c\equiv0.
	\]
Thus, for a good metric, the horizontal and vertical components are determined by the same metric on the base, and the horizontal--vertical cross term vanishes identically.
\end{definition}
\par The stronger decay imposed on the cross and vertical components will allow us, in the subsequent analysis, to isolate this contribution and relate the mass of $(TM,\widetilde g)$ to geometric quantities on \(M\).
 \section{Proof of the main theorem}\label{sec:proof-main}
 \par Throughout this section, all computations are carried out in a given preferred coordinates $\zeta$  on \(TM\) near infinity; recall that on $TU_\infty$, the latter coincides with
 \[
 (x^1,\ldots,x^n,y^1,\ldots,y^n)
 \]
We use the following notation.
\begin{itemize}
 	\item \(\widetilde\nu\) denotes the outward Euclidean unit normal to the coordinate sphere
 	\(\Sp_\rho\subset\mathbb R^{2n}\), while \(\nu_x\) denotes the outward Euclidean unit normal to the coordinate sphere \(\Sp_r\subset\mathbb R^n_x\). Thus
 	\[
 	\widetilde\nu^j=\frac{x^j}{\rho},
 	\qquad
 	\widetilde\nu^{\dot j}=\frac{y^j}{\rho}.
 	\]
 \item The Euclidean area measures on the spheres of radii \(r\), \(s\), and \(\rho\) are denoted by
 	\(\sigma_r\), \(\sigma_s\), and \(\sigma_\rho\), respectively.
 \item We write \(\mathbb R^n_x\) and \(\mathbb R^n_y\) for the Euclidean spaces with coordinates \(x^i\) and \(y^i\), respectively.
 \item The ADM flux integrand associated with a metric \(g\) is denoted by \(\mathscr I_g\).
 \item Repeated indices are summed according to the Einstein convention.
\end{itemize}
 \subsection{The flux decomposition}
\par By \hyperref[lem:inter-met]{Lemma~\ref{lem:inter-met}}, the connecting metric \(\mathdutchcal g\) is constant in the preferred coordinates on \(U_\infty\). In particular, its Levi--Civita connection vanishes there, and the metric \(\widetilde g\) takes the form
 \begin{equation}\label{eq:main-simple-form}
 	\widetilde g_{ij}=
 	\psi g_{ij}+(1-\psi)\delta_{ij},
 	\qquad
 	\widetilde g_{i\dot j}=a^c_{ij},
 	\qquad
 	\widetilde g_{\dot i\dot j}	=
 	\xi h_{ij}+(1-\xi)\delta_{ij}.
 \end{equation}
\par We evaluate the ADM flux of \(\widetilde g\) on the coordinate spheres
 \(\Sp_\rho\subset\mathbb R^{2n}\). 
 We write the ADM mass integrand $\mathscr I_{\widetilde g}$ (see \hyperref[eq:ADM]{\eqref{eq:ADM}}) as
\[
 \mathscr I_{\widetilde g}
 =
 \fancyRoman{1}
 +\fancyRoman{2}
 +\fancyRoman{3}
 +\fancyRoman{4},
 \]
 where the four terms correspond respectively to the horizontal-horizontal,
 mixed-horizontal, mixed-vertical, and vertical-vertical components.
 \par For the first contribution, \hyperref[eq:main-simple-form]{\eqref{eq:main-simple-form}} gives
 \begin{align}\label{eq:flux-1}
 	\fancyRoman{1}
 	&=
 	\left(
 	\widetilde g_{ij,i} - \widetilde g_{ii,j}
 	\right)\widetilde\nu^j \notag\\
 	&=
 	\psi
 	\left(
 	g_{ij,i}-g_{ii,j}
 	\right)
 	\frac{x^j}{\rho} \\
 	&=
 	\frac r\rho\,\psi\,\mathscr I_g.\notag
 \end{align}
\par For the second contribution,
 \begin{align}\label{eq:flux-II}
 	\fancyRoman{2}
 	&=
 	\left(
 	\widetilde g_{\dot i j,\dot i}
 	-
 	\widetilde g_{\dot i\dot i,j}
 	\right)\widetilde\nu^j \notag\\
 	&=
 	a^c_{ij,\dot i}\frac{x^j}{\rho}
 	-
 	\frac r\rho\,\xi_{,r}\,\left(\tr_0 h\right)
 	-
 	\frac r\rho\,\xi\left(\tr_0h\right)_{,r}.
 \end{align}
\par The third contribution is
 \begin{align}\label{eq:flux-III}
 	\fancyRoman{3}
 	&=
 	\left(
 	\widetilde g_{i\dot j,i}
 	-
 	\widetilde g_{ii,\dot j}
 	\right)\widetilde\nu^{\dot j}\notag\\
 	&=
 	a^c_{ij,i}\frac{y^j}{\rho}
 	-
 	\frac{s}{\rho}
 	\left(\tr_0g-n\right)\psi_{,s}.
 \end{align}
\par Finally,
 \begin{align}\label{eq:flux-IV}
 	\fancyRoman{4}
 	&=
 	\left(
 	\widetilde g_{\dot i\dot j,\dot i}
 	-
 	\widetilde g_{\dot i\dot i,\dot j}
 	\right)\widetilde\nu^{\dot j}\notag\\
 	&=
 	\xi_{,\dot i}\left(h_{ij}-\delta_{ij}\right)\frac{y^j}{\rho}
 	-
 	\frac{s}{\rho}\xi_{,s}\left(\tr_0h-n\right)\\
 	&=
 	\xi_{,\dot i}h_{ij}\frac{y^j}{\rho}
 	-
 	\frac{s}{\rho}\xi_{,s}
 	\bigl(\tr_0h-(n-1)\bigr).\notag
 \end{align}
\par The remainder of the proof consists of estimating these four contributions. The first term carries the ADM mass of the base manifold; the remaining terms are either nonnegative or asymptotically negligible under the admissibility hypotheses.
 \subsection{Admissible metrics of type I}
\par Suppose first that \(\widetilde g\) is admissible of type I.
\par Let
 \[
 F:TM\longrightarrow\mathbb R,
 \qquad
 F(x,\y)=|\y|=s.
 \]
 Since \(\psi\) is compactly supported in the fiber variable, choose \(R>0\) such that
 \[
 \supp(\psi)\subset B_R(0)\subset\mathbb R^n_y.
 \]
\par We begin with the horizontal contribution \(\fancyRoman{1}\) using \hyperref[eq:flux-1]{\eqref{eq:flux-1}}. Applying the co-area formula to the function \(F\) and using the fact that \(r^2+s^2=\rho^2\) on \(\Sp_\rho\), we obtain
 \begin{align*}
 	&\int_{\Sp_\rho}\fancyRoman{1}\,d\sigma_\rho \notag\\
 	&=
 	\int_0^R
 	\int_{\mathcal A_s^\rho}
 	\frac{\sqrt{\rho^2-s^2}}{\rho}
 	\psi(\y)
 	\int_{\Sp_{\sqrt{\rho^2-s^2}}\subset\mathbb R^n_x}
 	\mathscr I_g\,d\sigma_{\sqrt{\rho^2-s^2}}
 	\,d\sigma_s\,ds,
 \end{align*}
 where
 \[
 \mathcal A_s^\rho
 :=
 \left\{
 	\y\in\Sp_s \quad \text{\textbrokenbar}\quad
 	{\y}\times
 	\Sp_{\sqrt{\rho^2-s^2}}
 	\subset
 	TU_\infty\cap\Sp_\rho
 	\right\}.
 \]
\par Since \(M\) has a single asymptotic end, for every fixed \(R\) and all sufficiently large \(\rho\),
 \[
 \mathcal A_s^\rho=\Sp_s,
 \qquad
 0\le s\le R.
 \]
 Moreover,
 \[
 \frac{\sqrt{\rho^2-s^2}}{\rho}\longrightarrow1
 \]
 uniformly for \(0\le s\le R\). Since \(g\) has a well-defined ADM mass,
 \[
 \lim_{r\to\infty}
 \int_{\Sp_r}\mathscr I_g\,d\sigma_r
 =
 2(n-1)\omega_{n-1}\m(M,g).
 \]
 Therefore, for every fixed \(s\in[0,R]\),
 \begin{equation}\label{eq:base-flux-limit}
 	\lim_{\rho\to\infty}
 	\frac{\sqrt{\rho^2-s^2}}{\rho}
 	\int_{\Sp_{\sqrt{\rho^2-s^2}}}
 	\mathscr I_g\,d\sigma_{\sqrt{\rho^2-s^2}}
 	=
 	2(n-1)\omega_{n-1}\m(M,g).
 \end{equation}
\par Since the integrand is supported in \(0\le s\le R\), Fatou's lemma and
 \hyperref[eq:base-flux-limit]{\eqref{eq:base-flux-limit}} yield
 \begin{align*}
 	\varliminf_{\rho\to\infty}
 	\int_{\Sp_\rho}\fancyRoman{1}\,d\sigma_\rho
 	&\ge
 	2(n-1)\omega_{n-1}\m(M,g)
 	\int_0^R\int_{\Sp_s}\psi\,d\sigma_s\,ds\notag\\
 	&=
 	2(n-1)\omega_{n-1}
 	\|\psi\|_{L^1(\mathbb R^n_y)}
 	\m(M,g).
 \end{align*}
 \par Since 
 \[ 
 \rho \mapsto \int_{\Sp_{\sqrt{\rho^2-s^2}}} \mathscr I_g(x)\, d\sigma_{\sqrt{\rho^2-s^2}} \,d\sigma_s 
 \]
 is \emph{bounded above by a constant and the range of $s$ is bounded}, we can apply the Fatou's lemma for the upper limit (also known as the reverse Fatou);
 \par The same argument applied to the upper limit gives the reverse inequality,
 \begin{align*}
 	\varlimsup_{\rho\to\infty}
 	\int_{\Sp_\rho}\fancyRoman{1}\,d\sigma_\rho
 	&\le
 	2(n-1)\omega_{n-1}
 	\|\psi\|_{L^1(\mathbb R^n_y)}
 	\m(M,g).
 \end{align*}
 Consequently, 
 \begin{equation}\label{eq:flux-I-limit}
 	\lim_{\rho\to\infty}
 	\int_{\Sp_\rho}\fancyRoman{1}\,d\sigma_\rho
 	=
 	2(n-1)\omega_{n-1}
 	\|\psi\|_{L^1(\mathbb R^n_y)}
 	\m(M,g).
 \end{equation}
\par We next consider the second and fourth contributions. The decay assumptions
 \[
 a^c_{ij}\in\mathcal O_1(\rho^{-\lambda^c}),
 \qquad
 \lambda^c>2n-1,
 \]
 together with the corresponding assumptions on \(h\) and \(\xi\), imply
 \begin{equation}\label{eq:flux-II-small}
 	\int_{\Sp_\rho}\fancyRoman{2}\,d\sigma_\rho=\smallO(1),
 \end{equation}
 and
 \begin{equation}\label{eq:flux-IV-small}
 	\int_{\Sp_\rho}\fancyRoman{4}\,d\sigma_\rho= \smallO(1).
 \end{equation}
 In particular, both contributions vanish as
 \(\rho\to\infty\).
\par It remains to exploit the sign structure of \(\fancyRoman{3}\). From
 \hyperref[eq:flux-III]{\eqref{eq:flux-III}},
 \[
 \fancyRoman{3}
 =
a^c_{ij,i}\frac{y^j}{\rho}
 - 
\frac{s}{\rho}
 \left(\tr_0g-n\right)\psi_{,s}.
 \]
 The first term has vanishing asymptotic contribution by the decay of \(a^c\). Hence
 \begin{align*}
 	\varliminf_{\rho\to\infty}
 	\int_{\Sp_\rho}\fancyRoman{3}\,d\sigma_\rho
 	=
 	\varliminf_{\rho\to\infty}
 	\int_{\Sp_\rho}
 	-\frac{s}{\rho}
 	\left(\tr_0g-n\right)\psi_{,s}\,d\sigma_\rho.
 \end{align*}
\par Using the co-area formula once again,
 \begin{align}
 	&\int_{\Sp_\rho}
 	-\frac{s}{\rho}
 	\left(\tr_0g-n\right)\psi_{,s}\,d\sigma_\rho\notag\\
 	&=
 	\int_0^R
 	\int_{\Sp_s}
 	-\frac{s}{\rho}\psi_{,s}
 	\int_{\Sp_{\sqrt{\rho^2-s^2}}}
 	\left(\tr_0g-n\right)\,d\sigma_{\sqrt{\rho^2-s^2}}
 	\,d\sigma_s\,ds.
 	\label{eq:coarea-III}
 \end{align}
\par By the weak trace-majorizing assumption,
 \[
 \int_{\Sp_r}\left(\tr_0g-n\right)\,d\sigma_r\ge0
 \]
 for all sufficiently large \(r\). Since
 \[
 \psi_{,s}\le0,
 \]
 the integrand in \hyperref[eq:coarea-III]{\eqref{eq:coarea-III}} is nonnegative for sufficiently large \(\rho\). Thus
 \begin{equation}\label{eq:III-nonnegative}
 	\varliminf_{\rho\to\infty}
 	\int_{\Sp_\rho}\fancyRoman{3}\,d\sigma_\rho
 	\ge0.
 \end{equation}
\par We also need an upper bound. Since
 \[
 g_{ij}-\delta_{ij}
 =
 \mathcal O_2\left(r^{-(n-2)}\right),
 \]
 we have
 \[
 \tr_0g-n=\mathcal O\left(r^{-(n-2)}\right),
 \]
 and, for fixed \(s\),
 \[
 \frac{s}{\rho}=\mathcal O\left(r^{-1}\right).
 \]
 Hence
 \[
 \frac{s}{\rho}\left(\tr_0g-n\right)
 =
 \mathcal O\left(r^{-(n-1)}\right),
 \]
 so that
 \[
 \int_{\Sp_r} \frac{s}{\rho} \left( \tr_0g - n \right)\, d\sigma_r
 \]
remains uniformly bounded as $r\to\infty$. Since $\psi_{,s}$ is bounded and has compact support in the fiber variable, the resulting family of integrands admits a uniform integrable bound, which justifies the application of the reverse Fatou lemma.
\par By the co-area representation above, we thus get
 \begin{equation}\label{eq:III-bounded}
 	\varlimsup_{\rho\to\infty}
 	\int_{\Sp_\rho}\fancyRoman{3}\,d\sigma_\rho
 	<\infty.
 \end{equation}
\par Combining \hyperref[eq:III-nonnegative]{\eqref{eq:III-nonnegative}} and \hyperref[eq:III-bounded]{\eqref{eq:III-bounded}},
 \begin{equation}\label{eq:flux-III-bounded}
 	0\le
 	\varliminf_{\rho\to\infty}
 	\int_{\Sp_\rho}\fancyRoman{3}\,d\sigma_\rho
 	\le
 	\varlimsup_{\rho\to\infty}
 	\int_{\Sp_\rho}\fancyRoman{3}\,d\sigma_\rho
 	<\infty.
 \end{equation}
\par We can now combine the four contributions. By
 \hyperref[eq:flux-I-limit]{\eqref{eq:flux-I-limit}},
 \hyperref[eq:flux-II-small]{\eqref{eq:flux-II-small}},
 \hyperref[eq:flux-IV-small]{\eqref{eq:flux-IV-small}}, and
 \hyperref[eq:flux-III-bounded]{\eqref{eq:flux-III-bounded}},
 \begin{align}\label{eq:mass-lower-bound}
 	&2(2n-1)\omega_{2n-1}\,
 	\underline{\m}(TM,\widetilde g, \zeta)\notag\\
 	&\qquad=
 	\varliminf_{\rho\to\infty}
 	\int_{\Sp_\rho}\mathscr I_{\widetilde g}\,d\sigma_\rho\\
 	&\qquad\ge
 	2(n-1)\omega_{n-1}
 	\|\psi\|_{L^1(\mathbb R^n_y)}
 	\m(M,g).\notag
 \end{align}
 On the other hand, all four contributions have finite upper limits, and hence
 \begin{equation}\label{eq:mass-upper-finite}
 	\overline{\m}(TM,\widetilde g, \zeta)<\infty.
 \end{equation}
\par Since by the classical positive mass theorem applied to \((M,g)\),
 \[
 \m(M,g)\ge0,
 \]
 it follows that
 \begin{equation}\label{eq:mass-chain}
 	0
 	\le
 	\underline{\m}(TM,\widetilde g, \zeta)
 	\le
 	\overline{\m}(TM,\widetilde g, \zeta)
 	<\infty.
 \end{equation}
\par This proves the positivity and finiteness assertions in the type~I case.
\par Finally, suppose
 \[
 \underline{\m}(TM,\widetilde g, \zeta)=0.
 \]
 Since \(\psi\not\equiv0\), \hyperref[eq:mass-lower-bound]{\eqref{eq:mass-lower-bound}} implies
 \[
 \m(M,g)=0.
 \]
 The rigidity statement in the classical positive mass theorem, \hyperref[thm:PMTR]{Theorem~\ref{thm:PMTR}}, therefore gives
 \[
 (M,g)\cong_{\mathrm{isom}}\mathbb R^n,
 \]
 and, in the preferred coordinates,
 \[
 g=\delta.
 \]
 More precisely, the preceding estimates give
 \begin{align}\label{eq:key-mass-estimate}
 	&\frac{1}{2}	\varliminf_{\rho\to\infty}
 	\int_{\Sp_\rho}\fancyRoman{3}\,d\sigma_\rho\notag\\
 	&\le
 	(2n-1)\omega_{2n-1}
 	\underline{\m}(TM,\widetilde g, \zeta)
 	-
 	(n-1)\omega_{n-1}
 	\|\psi\|_{L^1(\mathbb R^n_y)}
 	\m(M,g)
 	\\
 	&\le
 	(2n-1)\omega_{2n-1}
 	\overline{\m}(TM,\widetilde g,\zeta)
 	-
 	(n-1)\omega_{n-1}
 	\|\psi\|_{L^1(\mathbb R^n_y)}
 	\m(M,g) \notag\\
 	&\le \frac{1}{2}	\varlimsup_{\rho\to\infty}
 	\int_{\Sp_\rho}\fancyRoman{3}\,d\sigma_\rho.\notag
 \end{align}
 If $g$ is weakly trace-rigid, then, one has
 \[
 \lim_{\rho\to\infty}
 \int_{\Sp_\rho}\fancyRoman{3}\,d\sigma_\rho = 0
 \]
thus \hyperref[eq:key-mass-estimate]{\eqref{eq:key-mass-estimate}} gives us
\[
\m(TM,\widetilde g, \zeta) =
\frac{(n-1)\omega_{n-1}}
{(2n-1)\omega_{2n-1}}
\m(M,g).
\]
\subsection{Admissible metrics of type II}
\par We now consider an admissible metric of type II. The contribution \(\fancyRoman{1}\) is unchanged, and hence
 \hyperref[eq:flux-I-limit]{\eqref{eq:flux-I-limit}} remains valid. Likewise, the mixed term involving \(a^c\) is controlled by the same decay assumption as in the type~I case.
\par The only new point is the structure of the vertical metric. Since
 \[
 h_{ij}=e^{H(x)}\delta_{ij},
 \]
 we have
 \[
 \tr_0h=ne^H.
 \]
 Consequently, \hyperref[eq:flux-II]{\eqref{eq:flux-II}} becomes
 \begin{align}
 	\fancyRoman{2}
 	&=
 	a^c_{ij,\dot i}\frac{x^j}{\rho}
 	-
 	n\frac r\rho\,\xi_{,r}\,e^H
 	n\frac r\rho\,\xi e^H H_{,r}.
 	\label{eq:type-II-II}
 \end{align}
\par Write
 \[
 \fancyRoman{2}'
 :=
 -n\frac r\rho
 \left(
 \xi_{,r}\,e^H+\xi e^HH_{,r}
 \right).
 \]
 Because
 \[
 \xi_{,r}\le0,
 \qquad
 H_{,r}\le0,
 \qquad
 \xi\ge0,
 \]
 we have
 \[
 \fancyRoman{2}'\ge0.
 \]
 The remaining term involving \(a^c\) is negligible by the assumed decay. Thus the type~II decay hypotheses imply
 \begin{equation}\label{eq:type-II-II-bound}
 	0\le
 	\varliminf_{\rho\to\infty}
 	\int_{\Sp_\rho}\fancyRoman{2}\,d\sigma_\rho
 	<\infty.
 \end{equation}
 Since on $[0,L]$, the function
 \[
 s\mapsto
 \int_{\Sp_r\subset\R^n_x}
 r^{-1}\xi_{,s}(1-e^H)\,d\sigma_r(x)
 \]
 is non-negative and uniformly bounded, applying the reverse Fatou, we get
 \[
 \varlimsup_{\rho\to\infty}
 \int_{\Sp_\rho}\fancyRoman{2}\,d\sigma_\rho
 <\infty.
 \]
\par The fourth contribution simplifies substantially. From
 \hyperref[eq:flux-IV]{\eqref{eq:flux-IV}},
 \[
 \fancyRoman{4}
 =
 \xi_{,\dot i}e^H\delta_{ij}\frac{y^j}{\rho}
  - \frac{s}{\rho}\xi_{,s}
 \bigl(ne^H-(n-1)\bigr).
 \]
 By the chain rule,
 \[
 \xi_{,\dot i}\frac{y^i}{\rho}
 =
 \frac{s}{\rho}\xi_{,s},
 \]
 and therefore
 \begin{align}
 	\fancyRoman{4}
 	&=
 	(n-1)\frac{s}{\rho}\xi_{,s}\,(1-e^H).
 	\label{eq:type-II-IV}
 \end{align}
\par Here
 \[
 \xi_{,s}\le0,
 \qquad
 1-e^H\le0,
 \]
 so the integrand in \hyperref[eq:type-II-IV]{\eqref{eq:type-II-IV}} is nonnegative.
\par Choose \(L>0\) so that
 \[
 \supp_y(\xi)\subset B_L(0).
 \]
 For fixed \(s\le L\),
 \[
 \frac{s}{\rho}=\mathcal O\left(r^{-1}\right),
 \qquad
 \text{as} \;\; r\to\infty.
 \]
 Moreover,
 \[
 1-e^H= \mathcal O(H),
 \]
 and the decay assumption on \(H\) gives
 \[
 H= \mathcal O\left(r^{-\sigma^v}\right).
 \]
 Together with the decay of \(\xi_{,s}\), this implies
 \[
 r^{-1}\xi_{,s}\left(1-e^H\right)
 =
 \mathcal O\left(r^{-n}\right)
 \]
 under the type~II admissibility conditions. Hence
 \[
 \int_{\Sp_r}
 r^{-1}\xi_{,s}\,\left(1-e^H\right)\,d\sigma_r
 =
  \mathcal O\left(r^{-1}\right),
 \]
 and consequently
 \begin{equation}\label{eq:type-II-IV-liminf}
 	\varliminf_{\rho\to\infty}
 	\int_{\Sp_\rho}\fancyRoman{4}\,d\sigma_\rho
 	\ge0.
 \end{equation}
\par The same decay estimate provides a uniform upper bound for the corresponding co-area integrals. Applying the upper-limit version of Fatou's lemma therefore gives
 \[
 \varlimsup_{\rho\to\infty}
 \int_{\Sp_\rho}\fancyRoman{4}\,d\sigma_\rho
 \le0.
 \]
 Thus
 \begin{equation}\label{eq:type-II-IV-limit}
 	\lim_{\rho\to\infty}
 	\int_{\Sp_\rho}\fancyRoman{4}\,d\sigma_\rho
 	=0.
 \end{equation}
 The remaining estimates are identical to those established in the type~I case; thus, so is the conclusing argument. This completes the proof of the main theorem.
 \qed
\begin{remark}[Optimality of the decay rate]\label{rem:opt-decay}
 	The decay threshold
 	\[
 	\sigma\ge n-2
 	\]
 	is intrinsic to the present argument. In particular, if the decay rate is strictly smaller than \(n-2\), the contribution
 	\[
 	\varliminf_{\rho\to\infty}
 	\int_{\Sp_\rho}\fancyRoman{3}\,d\sigma_\rho
 	\]
 	need not remain bounded. Thus the natural decay rate \(n-2\) is the weakest rate for which the present flux argument yields the required control.
 \end{remark}
\renewcommand{\baselinestretch}{1.2}
\bibliographystyle{plain}
\bibliography{MassTB} 
\vspace{10mm}
%
%
%
%
\end{document}